\renewcommand{\orcid}[1]{\href{https://orcid.org/#1}{\textcolor[HTML]{A6CE39}{orcid.org/#1}}}
\setlist[enumerate]{leftmargin=.5in}
\setlist[itemize]{leftmargin=.5in}
\crefname{hypothesis}{Hypothesis}{Hypotheses}
\title{Summation-by-parts operators for general function spaces
\thanks{
\monthyeardate\today 
\corresponding{Jan Glaubitz} 
}}
\author{
Jan Glaubitz\thanks{Department of Mathematics, Dartmouth College, Hanover, NH 03755, USA (\email{Jan.Glaubitz@Dartmouth.edu}, \orcid{0000-0002-3434-5563})}
\and 
{Jan Nordstr\"om\thanks{Department of Mathematics, Link\"oping University, 58183, Link\"oping, Sweden (\email{jan.nordstrom@liu.se}, \orcid{0000-0002-7972-6183})}} \thanks{Department of Mathematics and Applied Mathematics, University of Johannesburg, P.\,O.\ Box 524, Auckland Park 2006, Johannesburg, South Africa}
\and 
Philipp \"Offner\thanks{Institute of Mathematics, Johannes Gutenberg University, Mainz, Germany, (\email{poeffner@uni-mainz.de}, \orcid{0000-0002-1367-1917})} 
}
\DeclareMathOperator{\diag}{diag}
\newcommand{\Span}{\mathrm{span}}
\renewcommand{\d}{\mathrm{d}}
\newcommand{\intd}{\, \mathrm{d}}
\newcommand{\R}{\mathbb{R}}
\begin{document}

\maketitle

% REQUIRED
\begin{abstract}
	% 1. Why should people care? 
Summation-by-parts (SBP) operators are popular building blocks for systematically developing stable and high-order accurate numerical methods for time-dependent differential equations.  
% 2. What is the specific research question?
The main idea behind existing SBP operators is that the solution is assumed to be well approximated by polynomials up to a certain degree, and the SBP operator should therefore be exact for them.  
However, polynomials might not provide the best approximation for some problems, and other approximation spaces may be more appropriate. 
% 3. What is done here? 
In this paper, a theory for SBP operators based on general function spaces is developed. 
% 4. What are the key findings? 
We demonstrate that most of the established results for polynomial-based SBP operators carry over to this general class of SBP operators. 
% 5. What are the implications of these findings? 
Our findings imply that the concept of SBP operators can be applied to a significantly larger class of methods than currently known. 
We exemplify the general theory by considering trigonometric, exponential and radial basis functions. 
\end{abstract}

% REQUIRED
\begin{keywords}
	Summation-by-parts operators, mimetic discretization, general function spaces, trigonometric functions, exponential functions, radial basis functions
\end{keywords}

% REQUIRED
\begin{AMS}
	65M12, 65M60, 65M70, 65D25, 65T40, 65D12 
	% 65M12 = Stability and convergence of numerical methods for initial value and initial-boundary value problems involving PDEs
	% 65M60 = Finite element, Rayleigh-Ritz and Galerkin methods for initial value and initial-boundary value problems involving PDEs
	% 65M70 = Spectral, collocation and related methods for initial value and initial-boundary value problems involving PDEs
	% 65D25 = Numerical differentiation
	% 65T40 = Numerical methods for trigonometric approximation and interpolation
	% 65D12 = Numerical radial basis function approximation
\end{AMS}

% Once the paper is published
%\begin{DOI}
%	test
%\end{DOI}

\section{Introduction} 
\label{sec:introduction} 

% 1. Why should others care? 
In combination with a weak enforcement of boundary conditions (BCs), SBP operators allow for a systematic development of energy-stable semi-discretizations of energy-bounded initial boundary value problems (IBVPs) \cite{svard2014review,fernandez2014review}. 
Initially, the SBP concept was developed in the 1970s for finite difference (FD) methods \cite{kreiss1974finite,kreiss1977existence,scherer1977energy}. 
A summary of the accumulated theory for SBP operators as of 1994 is provided in \cite{strand1994summation}. 
More recent reviews can be found in \cite{svard2014review} (with a focus on theoretical properties of SBP operators) and \cite{fernandez2014review} (with a focus on the construction of SBP operators) as well as references therein. 
The works on SBP-based finite element methods are too numerous to list entirely, but some examples include spectral element \cite{carpenter2014entropy}, discontinuous Galerkin \cite{gassner2013skew,chen2017entropy}, finite volume \cite{nordstrom2001finite,nordstrom2003finite}, and flux reconstruction  \cite{huynh2007flux,ranocha2016summation,offner2018stability} schemes.
Also see \cite{nordstrom2019energy,abgrall2020analysis,abgrall2021analysis} and references therein. 
Other applications of SBP operators include essentially non-oscillatory (ENO) and weighted ENO (WENO) schemes \cite{carpenter2016entropy,fisher2011boundary,yamaleev2009systematic}, and implicit time integration methods \cite{nordstrom2013summation,linders2020properties,ranocha2021new}.

Although some of their details might differ, at their core, existing SBP operators are constructed to be exact for polynomials up to a certain degree. 
This corresponds to the---sometimes implicit---underlying assumption that, given a certain partial differential equation (PDE), polynomials are suited to approximate its solutions. 
However, for some IBVPs, polynomials might not be the best choice. 
Instead, other approximation spaces should be used. 
% Examples 
To illustrate this, consider boundary layer or highly-oscillatory periodic solutions. 
These solutions can be expected to be better approximated by exponential and trigonometric functions, respectively. 
Indeed, \cref{fig:expl_approx} provides a rudimentary demonstration of the advantage of using other approximation spaces instead of polynomial ones for these two examples. 

\begin{figure}[tb]
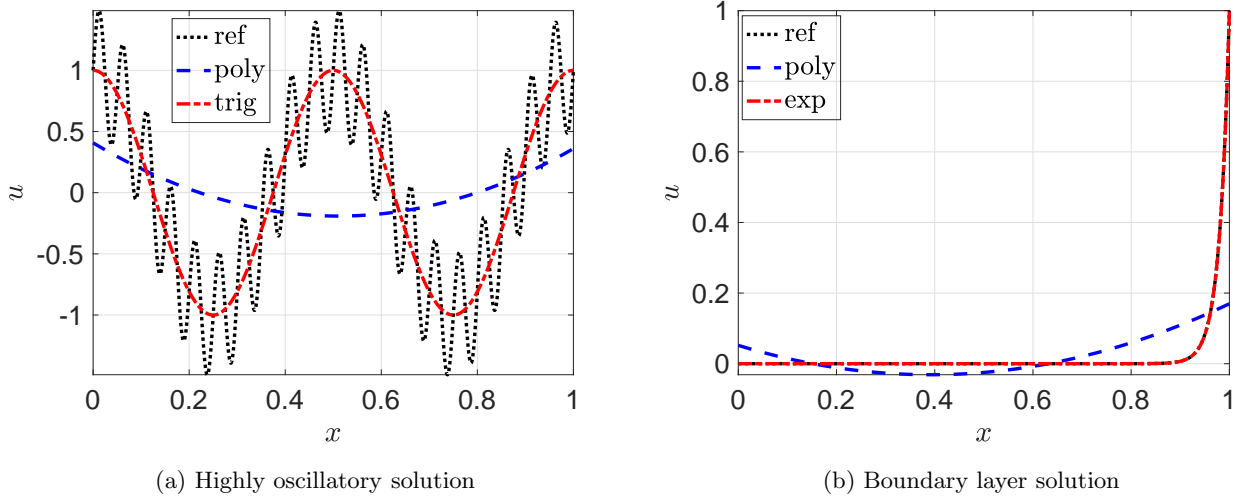

	\centering 
	\begin{subfigure}[b]{0.49\textwidth}
		\includegraphics[width=\textwidth]{%
      		figures/oscillatory_function_approx} 
    		\caption{Highly oscillatory solution}
    		\label{fig:expl_oscillatory_approx}
  	\end{subfigure}%
	~
  	\begin{subfigure}[b]{0.49\textwidth}
		\includegraphics[width=\textwidth]{%
      		figures/boundary_layer_approx} 
    		\caption{Boundary layer solution}
    		\label{fig:expl_layer_approx}
  	\end{subfigure}%
  	\caption{
  	Polynomial (``poly"), trigonometric (``trig") and exponential (``exp") least-squares function approximations to a boundary layer function and a highly oscillatory solution. 
	The polynomial, trigonometric and exponential approximation space was $\Span \{ 1,x, x^2 \}$, $\Span \{ 1,\sin( 4\pi x ), \cos( 4\pi x ) \}$ and $\Span \{ 1, x, e^{\alpha x} \}$ with suitably chosen $\alpha \in \R$, respectively. 
  	}
  	\label{fig:expl_approx}
\end{figure}  

% 2. What has been done already? 
% 3. What is the problem with what has already been done?
The potential advantages of using non-polynomial approximation spaces have been pointed out in several previous works. 
For instance, in \cite{kadalbajoo2003exponentially,kalashnikova2009discontinuous}, exponentially fitted schemes were used to solve singular perturbation problems. 
Discontinuous Galerkin methods based on non-polynomial approximation spaces were considered in \cite{yuan2006discontinuous}. 
There are also several works on ENO and WENO reconstructions based on non-polynomial function approximations \cite{christofi1996study,iske1996structure,hesthaven2019entropy}. 
Another line of related work deals with global and local radial basis function methods \cite{fasshauer1996solving,fornberg2015solving,fornberg2015primer} and flux reconstruction methods based on radial basis functions \cite{watson2022RBF}. 
Finally, there are some recent efforts on numerical methods for PDEs based on rational function approximations \cite{nakatsukasa2018aaa,gopal2019solving}. 

%Our Contribution
% 4. What is done here?
In this paper, we develop a theory for SBP operators based on general approximation spaces. 
% 5. What are the findings?  
It is demonstrated that most of the established results for polynomial-based SBP operators carry over to this general class of SBP operators. 
This includes different mimetic properties, the connection between certain quadrature formulas and SBP operators, and strategies for their construction. 
It is also revealed that the general approximation space has to include constants for the SBP operator to mimic conservation. 
% 6. What are the implications and applications?  
Our findings imply that the concept of SBP operators can be applied to a larger class of methods than currently known. 
In particular, SBP operators for general approximation spaces in combination with a weak enforcement of boundary conditions \cite{glaubitz2021stabilizing,glaubitz2021towards} will enable us to construct energy-stable radial basis function methods, which we will elaborate on in a forthcoming work. 

% Outline
The rest of this work is organized as follows. 
In \cref{sec:SBP_operators}, we introduce the concept of SBP operators for general function spaces, which will be referred to as FSBP operators. 
In \cref{sec:properties}, we address mimetic properties of these FSBP operators, such as conservation and energy-stability. 
The connection between FSBP operators and certain positive and exact quadrature formulas is discussed in \cref{sec:connection}. 
Building up on this connection, \cref{sec:construction} focuses on the construction FSBP operators. 
In \cref{sec:examples}, we present a few examples of FSBP operators for trigonometric, exponential, and radial basis function spaces. 
Finally, we end this work with a series of numerical tests in \cref{sec:num_tests} and some concluding thoughts in \cref{sec:summary}.  
\section{SBP operators for general approximation spaces} 
\label{sec:SBP_operators} 

Here, we describe how the concept of SBP operators on the interval $[x_L,x_R]$ can be extended to general approximation spaces.

\subsection{SBP operators for polynomial approximation spaces}
\label{sub:classic_SBP}

SBP operators are constructed to be mimetic of the integration by parts (IBP) property. 
At their core, SBP operators are usually defined as follows. 

\begin{definition}[SBP operators]\label{def:SBP_usual}
	An operator $D = P^{-1} Q$ is a (polynomial-based) \emph{SBP operator} of degree $d$ if 
	\begin{enumerate}%[label=(SBP\arabic*)] 
		\item[(i)] %\label{item:SBP1} 
		$D \mathbf{x}^j = j \mathbf{x}^{j-1}$ for $j=0,\dots,d$, 
		
		\item[(ii)] %\label{item:SBP2} 
		$P$ is a symmetric positive definite matrix, and 
		
		\item[(iii)] %\label{item:SBP3} 
		$Q + Q^T = B := \diag(-1,0,\dots,0,1)$.
		
	\end{enumerate} 
	Here, $\mathbf{x}^j = [x_1^j, \dots, x_N^j]^T$ denotes the vector of the function values of the $j$-th monomial $x^j$ evaluated at the grid points $x_1,\dots,x_N$. 
\end{definition}

It should be noted that \cref{def:SBP_usual} includes some restrictions, which can be relaxed. 
For instance, we assume that the grid points $\mathbf{x}$ include the boundary points. 
This restriction was relaxed, for instance, in \cite{fernandez2014generalized} and analyzed in \cite{nordstrom2017conservation,offner2019error}. 
Further generalizations include upwind operators \cite{mattsson2017diagonal}, higher derivatives \cite{mattsson2004summation,mattsson2014diagonal,ranocha2021broad}, curvilinear or staggered grids \cite{o2017energy,aalund2019encapsulated,fernandez2019staggered}, and multiple dimensions \cite{hicken2016multidimensional}.
Also the combination of SBP operators with splitting techniques \cite{nordstrom2006conservative,gassner2016split} to construct entropy stable methods should be mentioned; see \cite{chen2020review} and references therein.  
However, for clarity of presentation, we will base our generalization of SBP operators on \cref{def:SBP_usual}. 

\begin{remark}[Norm matrix $P$]
	We restrict the norm matrix $P$ in \cref{def:SBP_usual} to be symmetric and positive definite. 
	The latter is needed for $P$ to induce a discrete norm, ${\|\mathbf{u}\|_P^2 = \mathbf{u}^T P \mathbf{u}}$ for $\mathbf{u} \in \R^N$, and the first to obtain energy estimates such as the one in \cref{sub:linear_energy}. 
	See the reviews \cite{svard2014review,fernandez2014review} and references therein. 
\end{remark}

\subsection{SBP operators for general function spaces} 
\label{sub:gen_SBP}

While this is not always obvious, the high-level idea behind (i) in \cref{def:SBP_usual} is that polynomials of degree up to $d$ are expected to approximate the PDE solution well for $d$ high enough, and the differentiation operator $D$ should therefore be exact for them. 
That is, (i) in \cref{def:SBP_usual} can be reformulated as 
\begin{equation}\label{eq:classic_SBP_exactness}
	D f(\mathbf{x}) = f'(\mathbf{x}) \quad \forall f \in \mathcal{P}_d.
\end{equation} 
Here, $\mathcal{P}_d$ denotes the linear space of polynomials of degree up to $d$ and $f(\mathbf{x}), f'(\mathbf{x})$ respectively denote the vector of the function values of $f$ and its derivative $f'$ at the grid points $\mathbf{x}$: 
\begin{equation} 
	f(\mathbf{x}) = [f(x_1), \dots, f(x_N)]^T, \quad 
	f'(\mathbf{x}) = [f'(x_1), \dots, f'(x_N)]^T. 
\end{equation}
It is now clearly possible to replace $\mathcal{P}_d$ in \cref{eq:classic_SBP_exactness} by a non-polynomial function space with other approximation properties.  
To this end, assume that it is reasonable to approximate the solution $u$ with functions from a finite-dimensional approximation space $\mathcal{F} \subset C^1([x_L,x_R])$. 
In this case, \cref{eq:classic_SBP_exactness} is modified to
\begin{equation} 
	D f(\mathbf{x}) = f'(\mathbf{x}) \quad \forall f \in \mathcal{F}.
\end{equation}
Consequently, it is natural to formulate the following generalized definition of SBP operators.

\begin{definition}[FSBP operators]\label{def:SBP_general} 
	Let $\mathcal{F} \subset C^1([x_L,x_R])$ be a finite-dimensional function space. 
	An operator $D = P^{-1} Q$ is an \emph{$\mathcal{F}$-based SBP operator} if 
	\begin{enumerate}%[label=(SBP\arabic*)] 
		\item[(i)] \label{item:SBP_general1} 
		$D f(\mathbf{x}) =f'(\mathbf{x})$ for all $f \in \mathcal{F}$, 
		
		\item[(ii)] \label{item:SBP_general2} 
		$P$ is a symmetric positive definite matrix, and 
		
		\item[(iii)] \label{item:SBP_general3} 
		$Q + Q^T = B = \diag(-1,0,\dots,0,1)$.
		
	\end{enumerate}  
\end{definition} 

Note that only (i) in \cref{def:SBP_general} differs from \cref{def:SBP_usual}. 
Henceforth, for sake of simplicity, we will refer to the $\mathcal{F}$-based SBP operators in \cref{def:SBP_general} as \emph{FSBP operators}. 

We end this section by noting that \cref{def:SBP_general,def:SBP_usual} only require the norm matrix $P$ to be symmetric positive definite. 
That said, for simplicity, we henceforth restrict ourselves to diagonal norm matrices $P$.  
\section{Mimetic properties of FSBP-SAT methods} 
\label{sec:properties} 

Here, we address some basic mimetic properties of FSBP operators. 
This serves two purposes: 
\begin{itemize} 
	\item It is shown that for conservation it is necessary to include constants in the function space $\mathcal{F}$. 

	\item It is demonstrated that most other mimetic results for polynomial-based SBP operators carry over to the more general framework of FSBP operators. 
	
\end{itemize} 
For clarity, we illustrate this for the linear advection equation.   
Similar mimetic properties hold also for other equations, which is briefly addressed in \cref{app:outlook}. 

% Linear advection equation
\subsection{The linear advection equation} 
\label{sub:linear} 

Consider the linear advection equation 
\begin{equation}\label{eq:linear}
	\partial_t u + a \partial_x u = 0, \quad x \in (x_L,x_R), \ t>0,
\end{equation} 
with $a > 0$. 
Moreover, \cref{eq:linear} is equipped with suitable initial and boundary conditions, 
\begin{equation}\label{eq:IBC}
	u(0,x) = u_0(x), \quad 
	u(t,x_L) = g(t).
\end{equation} 
Given an FSBP operator $D = P^{-1}Q$, the FSBP-SAT semi-discretization of \cref{eq:linear}, \cref{eq:IBC} is formally given by 
\begin{equation}\label{eq:lin_discr}
	\mathbf{u}_t  +a D \mathbf{u} = P^{-1} \mathbb{S}.
\end{equation} 
Here, $\mathbf{u} = [u_1,u_2,\dots,u_N]^T$ denotes the vector of the nodal values of the numerical solution at the grid points $\mathbf{x} = [x_1,\dots,x_N]^T$.   
Furthermore, $\mathbb{S}$ is a \emph{simultaneous approximation term (SAT)} that weakly enforces the boundary condition \cref{eq:IBC}. 
The idea behind adding this term is to \emph{simultaneously approximate} the equation and the boundary condition by ``pulling" the numerical solution towards the boundary data $g$ at the left boundary. 
For \cref{eq:linear} with positive wave speed, the SAT is given by 
\begin{equation} 
	\mathbb{S} = [\mathbb{S}_1,0,\dots,0]^T, \quad 
	\mathbb{S}_1 = - \sigma a (u_1-g),
\end{equation}
where the parameter $\sigma$ remains to be determined. 
The subsequent analysis of FSBP-SAT methods follows the one for polynomial-based SBP operators \cite{svard2014review,fernandez2014review}.

\subsection{Conservation} 
\label{sub:linear_cons} 

Recall that the exact solution of \cref{eq:linear} satisfies 
\begin{equation}\label{eq:lin_cons}
	\frac{\d}{\d t} \int_{x_L}^{x_R} u \intd x = - a \left[ u(t,x_R) - g(t) \right].
\end{equation} 
This means that the total amount of the quantity $u$ (e.\,g.\ mass) is neither created nor destroyed inside the domain, and only changes due to the flux across the boundaries. 
This is referred to as \emph{conservation}. 
In many applications, conservation is an important physical property and is therefore desired to be mimicked by the numerical scheme. 
Note that given an appropriate norm matrix $P$, the left-hand side of \cref{eq:lin_cons} can be approximated by $\mathbf{1}^T P \mathbf{u}_t$.
Then, substituting the FSBP-SAT semi-discretization \cref{eq:lin_discr}, we get 
\begin{equation}\label{eq:semi_SAT}
	\mathbf{1}^T P \mathbf{u}_t 
		= - a \mathbf{1}^T PD \mathbf{u} + \mathbf{1}^T \mathbb{S}. 
\end{equation} 
The SBP properties transform \cref{eq:semi_SAT} to  
\begin{equation}\label{eq:semi_SAT3}
	\mathbf{1}^T P \mathbf{u}_t 
		= - a \left[ u_N - \left( \sigma g(t) + [ 1-\sigma ] u_1 \right) \right] + a (D \mathbf{1})^T P \mathbf{u}. 
\end{equation} 
The first term on the right-hand side of \cref{eq:semi_SAT3} approximates the right-hand side of \cref{eq:lin_cons}. 
Note that $\sigma g(t) + (1-\sigma) u_1$ is a linear combination of the boundary data $g(t)$ and the value of the numerical solution at the left boundary $u_1$. 
However, \cref{eq:semi_SAT3} also contains the additional volume term $a (D \mathbf{1})^T P \mathbf{u}$, while no such term is present in \cref{eq:lin_cons}. 
To avoid artificial construction or destruction of the quantity $u$, $D \mathbf{1} = \mathbf{0}$ must hold. 
For polynomial-based SBP operators this is automatically satisfied. 
For FSBP-operators, on the other hand, we can ensure this by requiring the function space $\mathcal{F}$ to contain constants (see (i) in \cref{def:SBP_general}). 
In this case, we get 
\begin{equation}\label{eq:lin_discr_cons_sigma} 
	\mathbf{1}^T P \mathbf{u}_t 
		= - a \left[ u_N - \left( \sigma g(t) + [ 1-\sigma ] u_1 \right) \right]. 
\end{equation} 
In particular, for $\sigma = 1$, \cref{eq:lin_discr_cons_sigma} yields 
\begin{equation}\label{eq:lin_discr_cons}
	\mathbf{1}^T P \mathbf{u}_t = - a [ u_N - g(t) ], 
\end{equation} 
which is a discrete analogue to \cref{eq:lin_cons}. 
Note that other choices for $\sigma$ would be possible, but the choice $\sigma = 1$ comes with the additional advantage of error minimization,  
see \cref{app:error}.

\subsection{Energy-stability} 
\label{sub:linear_energy} 

% Continuous case 
Besides being conservative, exact solutions of the linear advection equation \cref{eq:linear} are also energy-stable. 
That is, the growth of their energy over time is bounded as demonstrated by the energy method: 
\begin{equation}\label{eq:cont_energy_rate} 
	\frac{\d}{\d t} \| u \|_{L^2}^2 
		= - a \int_{x_L}^{x_R} 2 u \partial_x u \intd x 
		= -a \left[  u(t,x_R)^2 - u(t,x_L)^2 \right] 
		\leq a g^2(t). 
\end{equation} 
% Discrete case 
In order to establish stability for the numerical solution, a similar bound to \cref{eq:cont_energy_rate} is required. 
To this end, we observe that the left-hand side of \cref{eq:cont_energy_rate} can be approximated by $\frac{\d}{\d t} \mathbf{u}^T P \mathbf{u}$ on a discrete level. 
Furthermore, we note that 
\begin{equation}\label{eq:lin_rate}
	\frac{\d}{\d t} \mathbf{u}^T P \mathbf{u} 
		= 2 \mathbf{u}^T P \mathbf{u}_t \\
		= -2a \mathbf{u}^T P D \mathbf{u} + 2 \mathbf{u}^T \mathbb{S}, 
\end{equation} 
where \cref{eq:lin_discr} has been used.
The SBP properties therefore yield
\begin{equation}\label{eq:lin_rate2}
	\frac{\d}{\d t} \|\mathbf{u}\|^2_P 
		= a \left( u_1^2 - u_N^2 - 2 \sigma u_1^2 + 2 \sigma u_1 g \right) 
		\leq ag^2 \left( \frac{\sigma^2}{2\sigma-1} \right) 
\end{equation}
for $\sigma > 1/2$, where $\|\mathbf{u}\|^2_P = \mathbf{u}^T P \mathbf{u}$.
See \cite{abgrall2020analysis,fernandez2014review,svard2014review} for more details. 
Clearly \cref{eq:lin_rate2} is a discrete analog to \cref{eq:cont_energy_rate}. 
The upper bound $a g^2 \sigma^2/(2\sigma-1)$ is minimized for $\sigma =1$, and becomes the right-hand side of \cref{eq:cont_energy_rate}. 
Also see \cite{aalund2016provably} and \cref{app:error}. 

We end this section by summarizing the results so far.  

\begin{theorem}\label{thm:lin_stable}
	Let $\mathcal{F} \subset C^1$ be a finite-dimensional function space and $D$ be an $\mathcal{F}$-based SBP operator. 
	The FSBP-SAT semi-discretization \cref{eq:lin_discr} of the linear advection equation \cref{eq:linear} is
	\begin{itemize} 
		\item 
		conservative if $\mathcal{F}$ contains constants. 
		
		\item 
		energy-stable if $\sigma > 1/2$.	
	\end{itemize}
\end{theorem}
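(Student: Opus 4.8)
The plan is to treat the two assertions separately, in each case rewriting the discrete rate of change produced by the semi-discretization \cref{eq:lin_discr} entirely in terms of boundary values by exploiting the SBP structure of $D = P^{-1}Q$. Both arguments rest on the same two algebraic facts: property (iii) lets me trade the quadratic action of $Q$ for the boundary matrix $B$, while property (i) forces $D\mathbf{1} = \mathbf{0}$ precisely when $\mathcal{F}$ contains constants, since the derivative of a constant vanishes and exactness transfers this to the discrete operator.

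For conservation I would start from $\mathbf{1}^T P \mathbf{u}_t$, the discrete analogue of $\frac{\d}{\d t}\int_{x_L}^{x_R} u \intd x$, and substitute \cref{eq:lin_discr} together with $PD = Q$ to obtain $\mathbf{1}^T P \mathbf{u}_t = -a\,\mathbf{1}^T Q \mathbf{u} + \mathbf{1}^T \mathbb{S}$. Writing $Q = B - Q^T$ via property (iii) splits the first term into the boundary contribution $\mathbf{1}^T B \mathbf{u} = u_N - u_1$ and a residual volume term $a(Q\mathbf{1})^T \mathbf{u} = a(D\mathbf{1})^T P \mathbf{u}$, using the symmetry of $P$. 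The crux of this half is that $\mathbf{1} \in \mathcal{F}$ yields $D\mathbf{1} = \mathbf{0}$, so this volume term disappears; inserting $\mathbf{1}^T \mathbb{S} = -\sigma a(u_1 - g)$ then reproduces \cref{eq:lin_discr_cons_sigma}, a discrete version of \cref{eq:lin_cons} free of spurious interior production, which is exactly what I mean by conservation.

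For energy-stability I would differentiate the discrete energy, using the symmetry of $P$ from property (ii), to get $\frac{\d}{\d t}\|\mathbf{u}\|_P^2 = 2\mathbf{u}^T P \mathbf{u}_t$, and substitute \cref{eq:lin_discr} as in \cref{eq:lin_rate}. The key simplification is that property (iii) symmetrizes the quadratic form, $\mathbf{u}^T Q \mathbf{u} = \tfrac{1}{2}\mathbf{u}^T (Q + Q^T)\mathbf{u} = \tfrac{1}{2}\mathbf{u}^T B \mathbf{u} = \tfrac{1}{2}(u_N^2 - u_1^2)$, collapsing the interior contribution to boundary terms whether or not constants lie in $\mathcal{F}$; adding $2\mathbf{u}^T\mathbb{S} = -2\sigma a u_1^2 + 2\sigma a u_1 g$ recovers \cref{eq:lin_rate2}.

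The remaining, and essentially only nontrivial, step is to bound that right-hand side. I would discard the nonpositive term $-a u_N^2$ and maximize the leftover quadratic $a\bigl((1-2\sigma)u_1^2 + 2\sigma u_1 g\bigr)$ over the unconstrained boundary value $u_1$. For $\sigma > 1/2$ the leading coefficient $1-2\sigma$ is negative, so completing the square (equivalently, evaluating at the critical point $u_1 = \sigma g/(2\sigma-1)$) gives the finite maximum $a g^2 \sigma^2/(2\sigma-1)$, the claimed bound. The hard part, such as it is, lies in this last estimate: the hypothesis $\sigma > 1/2$ is exactly the condition that keeps the quadratic bounded above, since for $\sigma \le 1/2$ it grows without limit and no energy estimate is available. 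This completes both parts of the statement.
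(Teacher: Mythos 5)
Your proof is correct and follows essentially the same route as the paper: both parts reduce the discrete rates $\mathbf{1}^T P \mathbf{u}_t$ and $2\mathbf{u}^T P \mathbf{u}_t$ to boundary terms via $PD = Q$ and $Q + Q^T = B$, use $D\mathbf{1} = \mathbf{0}$ (from constants lying in $\mathcal{F}$) to kill the spurious volume term $a(D\mathbf{1})^T P \mathbf{u}$, and bound the resulting boundary quadratic by completing the square, which is precisely where $\sigma > 1/2$ enters. The only difference is that you spell out the algebra (the splitting $Q = B - Q^T$ and the maximization at $u_1 = \sigma g/(2\sigma-1)$) that the paper compresses into ``the SBP properties yield'' and a citation.
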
 

\begin{remark} 
For energy-stability, it is not necessary to include constants in the function space $\mathcal{F}$. 
This is only needed to mimic conservation on a discrete level. 
\end{remark} 
 
\section{FSBP operators and quadrature formulas} 
\label{sec:connection} 

We now investigate the connection between FSBP operators and certain quadrature formulas. 
It is again demonstrated that most results for polynomial-based SBP operators \cite{hicken2016multidimensional,svard2014review,fernandez2014review} carry over to FSBP operators.
In particular, it is shown that the existence of FSBP operators is equivalent to the existence of certain quadrature formulas.

\subsection{Quadrature formulas} 
\label{sub:connection_QFs} 

We start by providing a few comments on exactness conditions of quadrature formulas \cite{engels1980numerical,davis2007methods,brass2011quadrature}. 
Let $I_{\mathbf{x}, \mathbf{w}}$ be a quadrature formula consisting of a set of points $\mathbf{x} = [x_1,\dots,x_N]^T$ and weights $\mathbf{w} = [w_1,\dots,w_N]^T$ such that 
\begin{equation} 
	I_{\mathbf{x}, \mathbf{w}}[f] 
		:= \sum_{n=1}^N w_n f(x_n) 
		\approx \int_{x_L}^{x_R} f(x) \intd x 
		=: I[f].
\end{equation}
Here, $f:[x_L,x_R] \to \R$ is a suitable (continuous) function, and we assume that the points $\mathbf{x}$ all lie in $[x_L,x_R]$.

\begin{definition}\label{def:QF_pos_exact}  
	We say that a quadrature formulas $I_{\mathbf{x}, \mathbf{w}}$ is \emph{positive} if its weights are positive, i.\,e., $w_n > 0$ for all $n=1,\dots,N$. 
	Moreover, $I_{\mathbf{x}, \mathbf{w}}$ is said to be \emph{$\mathcal{G}$-exact} if the following \emph{exactness condition} is satisfied:
	\begin{equation} 
		I_{\mathbf{x}, \mathbf{w}}[g] = I[g] \quad \forall g \in \mathcal{G}.
	\end{equation} 
\end{definition}

We already pre-empt that this function space will differ from the approximation space $\mathcal{F}$ for which the SBP operator $D$ is exact. 
Indeed, $\mathcal{F}$-based diagonal-norm SBP operator will be connected to positive and $\mathcal{G}$-exact quadrature formulas, where $\mathcal{G} = (\mathcal{F}^2)'$ is of higher dimension than $\mathcal{F}$. 
More on this in \cref{sub:connection_char}.

\subsection{A necessary and sufficient condition for the existence of FSBP operators} 
\label{sub:connection_char} 

Our main result (\cref{cor:diag_char} below) shows that a diagonal-norm FSBP operator $D$ exists if and only if there exists a positive and $(\mathcal{F}\mathcal{F})'$-exact quadrature formula, both supported on the same grid points. 
Here, $(\mathcal{F}\mathcal{F})'$ denotes the space of all functions that correspond to the derivative of a product of two functions from $\mathcal{F} \subset C^1([x_L,x_R])$. 
That is, 
\begin{equation} 
	(\mathcal{F} \mathcal{F})' 
		= \left\{ \, (f g)' \mid f,g \in \mathcal{F} \, \right\}.
\end{equation}
Note that, by the chain rule, $(f g)' = f'g + g'f$. 

\begin{example}\label{expl:F_poly}
	For polynomial-based SBP operators, we have $\mathcal{F} = \mathcal{P}_d$. 
	In this case, $(\mathcal{F} \mathcal{F})'$ is equal to the space of polynomials of degree up to $2d-1$, $\mathcal{P}_{2d-1}$. 
\end{example}

We start by noting that the norm matrix $P$ corresponding to an FSBP operator (diagonal or nondiagonal) has to satisfy a certain compatibility relation. 

\begin{lemma}[Necessary condition for general FSBP operators]\label{lem:gen_nec} 
	If $D = P^{-1} Q$ is an $\mathcal{F}$-based SBP operator, then the norm matrix $P$ satisfies the \emph{compatibility relation} 
	\begin{equation}\label{eq:comp_cond}
		g(\mathbf{x})^T P f'(\mathbf{x}) + f(\mathbf{x})^T P g'(\mathbf{x})
			= fg \big|_{x_L}^{x_R} 
			\quad \forall f,g \in \mathcal{F}. 
	\end{equation}
\end{lemma}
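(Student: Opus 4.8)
The plan is to convert the defining exactness property (i) into an algebraic identity relating $Q$ and $P$, and then read off the compatibility relation by exploiting the symmetry structure encoded in (iii). First I would rewrite (i): since $D = P^{-1}Q$ and $D f(\mathbf{x}) = f'(\mathbf{x})$ for every $f \in \mathcal{F}$, left-multiplying by $P$ gives the key identity $Q f(\mathbf{x}) = P f'(\mathbf{x})$, valid for all $f \in \mathcal{F}$. This is the crux of the argument: it lets me trade every occurrence of $P f'(\mathbf{x})$ in the left-hand side of \cref{eq:comp_cond} for $Q f(\mathbf{x})$.

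Next I would substitute this identity into both terms of the left-hand side, obtaining $g(\mathbf{x})^T Q f(\mathbf{x}) + f(\mathbf{x})^T Q g(\mathbf{x})$. The single observation that makes everything collapse is that each summand is a scalar and hence equal to its own transpose; applying this to the second summand yields $f(\mathbf{x})^T Q g(\mathbf{x}) = g(\mathbf{x})^T Q^T f(\mathbf{x})$. Adding the two terms then factors as $g(\mathbf{x})^T (Q + Q^T) f(\mathbf{x})$, at which point property (iii) supplies $Q + Q^T = B$.

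Finally I would evaluate $g(\mathbf{x})^T B f(\mathbf{x})$ directly from the diagonal structure $B = \diag(-1,0,\dots,0,1)$, which gives $f(x_N) g(x_N) - f(x_1) g(x_1)$, and then invoke the standing convention that the grid includes the endpoints, $x_1 = x_L$ and $x_N = x_R$, to recognize this as $fg \big|_{x_L}^{x_R}$. I do not expect a genuine obstacle here: the argument is a short chain of substitutions, and it is worth noting that it uses only (i) and (iii) together with the invertibility of $P$ — the symmetry and positive definiteness of $P$ from (ii) are not actually needed for this particular lemma. The one point demanding care is the transpose step, which is precisely where the symmetric part $Q + Q^T$ of $Q$ is forced to appear; it is the bilinear, two-function form of the statement (involving both $f$ and $g$), rather than a single quadratic form, that makes this manipulation legitimate without assuming $Q$ itself is symmetric.
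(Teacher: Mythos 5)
Your proof is correct and is essentially the argument the paper relies on: the paper's proof of \cref{lem:gen_nec} simply cites the polynomial-case literature and notes that the argument carries over verbatim to general $\mathcal{F}$, and that standard argument is exactly your chain $Q f(\mathbf{x}) = P f'(\mathbf{x})$, the scalar-transpose step, and $Q + Q^T = B$ evaluated at the boundary nodes. Your side remark that only (i), (iii), and the invertibility of $P$ are used---not the symmetry or positive definiteness in (ii)---is also accurate.
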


\begin{proof} 
	\cref{lem:gen_nec} is well-known in the context of polynomial-based SBP operators. 
	See, for instance, \cite{hicken2013summation,linders2018order} or the reviews \cite{svard2014review,fernandez2014review}. 
	It is easy to verify that the arguments in the proof for the polynomial case also hold for general function spaces.  
\end{proof} 

\cref{lem:gen_nec} provides us with a necessary condition for $D = P^{-1} Q$ to be an $\mathcal{F}$-based SBP operator in terms of the norm matrix $P$. 
Indeed, if $P$ is not satisfying the compatibility relation \cref{eq:comp_cond}, then $D = P^{-1} Q$ cannot be an $\mathcal{F}$-based SBP operator. 
We point out that \cref{lem:gen_nec} holds for diagonal- as well as nondiagonal-norm matrices $P$. 
Henceforth, we restrict ourselves to FSBP operators with diagonal-norm matrix $P$.
This restriction allows us to formulate a sharper necessary condition for the existence of FSBP operators. 

\begin{theorem}[Necessary condition for diagonal-norm FSBP operators]\label{thm:diag_nec} 
	Let $D = P^{-1} Q$ be an $\mathcal{F}$-based SBP operator with positive definite diagonal-norm matrix $P = \diag(\mathbf{p})$. 
	Then the diagonal elements of $P$ correspond to the weights of a positive and $(\mathcal{F} \mathcal{F})'$-exact quadrature formula. 
\end{theorem}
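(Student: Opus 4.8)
The plan is to verify the two defining properties of a positive, $(\mathcal{F}\mathcal{F})'$-exact quadrature formula directly from the hypotheses, leaning on the compatibility relation of \cref{lem:gen_nec}. The candidate quadrature formula is $I_{\mathbf{x},\mathbf{p}}$, supported on the grid points $\mathbf{x}$ with weights given by the diagonal $\mathbf{p} = [p_1,\dots,p_N]^T$ of $P$.

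First I would establish positivity. Since $P = \diag(\mathbf{p})$ is positive definite, testing against the standard basis vector $\mathbf{e}_n$ gives $p_n = \mathbf{e}_n^T P \mathbf{e}_n > 0$ for every $n$, so $I_{\mathbf{x},\mathbf{p}}$ is positive in the sense of \cref{def:QF_pos_exact}.

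The core of the argument is exactness. Let $h \in (\mathcal{F}\mathcal{F})'$ be arbitrary; by definition $h = (fg)'$ for some $f,g \in \mathcal{F}$. The crucial observation is that, because $P$ is diagonal, the bilinear form appearing in the compatibility relation \cref{eq:comp_cond} collapses into a quadrature sum:
\begin{equation*}
	g(\mathbf{x})^T P f'(\mathbf{x}) + f(\mathbf{x})^T P g'(\mathbf{x})
		= \sum_{n=1}^N p_n \big( g(x_n) f'(x_n) + f(x_n) g'(x_n) \big)
		= \sum_{n=1}^N p_n (fg)'(x_n) = I_{\mathbf{x},\mathbf{p}}[h].
\end{equation*}
\cref{lem:gen_nec} then identifies this quantity with the boundary term $fg\big|_{x_L}^{x_R}$. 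Since $f,g \in C^1([x_L,x_R])$ their product $fg$ is $C^1$, so the fundamental theorem of calculus yields $fg\big|_{x_L}^{x_R} = \int_{x_L}^{x_R} (fg)' \intd x = I[h]$. Chaining these equalities gives $I_{\mathbf{x},\mathbf{p}}[h] = I[h]$, which is precisely $(\mathcal{F}\mathcal{F})'$-exactness.

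I expect the derivation to be essentially routine once \cref{lem:gen_nec} is available; there is no genuine obstacle to surmount. The only point requiring care --- and the reason the hypothesis of a diagonal norm matrix is essential --- is the middle step above: only when $P$ is diagonal does the quadratic form $f(\mathbf{x})^T P g'(\mathbf{x})$ reduce to a weighted sum of pointwise products $\sum_n p_n f(x_n) g'(x_n)$, which is exactly what lets us read off quadrature exactness. For a nondiagonal $P$ the same manipulation produces cross terms $P_{mn} f(x_m) g'(x_n)$ with $m \neq n$, and the interpretation as a nodal quadrature formula breaks down. This is why \cref{lem:gen_nec} is stated for general $P$, whereas the sharper quadrature conclusion is reserved for the diagonal case.
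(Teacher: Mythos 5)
Your proposal is correct and follows essentially the same route as the paper's proof: invoke \cref{lem:gen_nec}, use diagonality of $P$ to collapse the compatibility relation \cref{eq:comp_cond} into the weighted sum $\sum_n p_n (fg)'(x_n)$, and identify the boundary term $fg\big|_{x_L}^{x_R}$ with $\int_{x_L}^{x_R}(fg)'\intd x$ to read off $(\mathcal{F}\mathcal{F})'$-exactness. Your explicit treatment of positivity via $p_n = \mathbf{e}_n^T P \mathbf{e}_n > 0$ and your remark on why nondiagonal $P$ breaks the argument are nice additions, but they do not change the underlying approach.
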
 

\begin{proof} 
	\cref{lem:gen_nec} implies that $P$ has to satisfy the compatibility equation \cref{eq:comp_cond}. 
	For a diagonal-norm matrix $P = \diag(p_1,\dots,p_N)$, \cref{eq:comp_cond} becomes 
	\begin{equation}\label{eq:diag_nec_proof1}
		\sum_{n=1}^N p_n \left[ f'(x_n) g(x_n)  + g'(x_n) f(x_n)  \right]
			= fg \big|_{x_L}^{x_R} 
			\quad \forall f,g \in \mathcal{F}.
	\end{equation} 
	Next note that $f'g + g'f = (fg)'$, and that $fg$ on the right-hand side of \cref{eq:diag_nec_proof1} is the anti-derivative of this function. 
	Thus, \cref{eq:diag_nec_proof1} can be reformulated as 
	\begin{equation}\label{eq:diag_nec_proof2} 
		\sum_{n=1}^N p_n (fg)'(x_n) = \int_{x_L}^{x_R} (fg)'(x) \intd x \quad \forall f,g \in \mathcal{F}.
	\end{equation} 
	Finally, \cref{eq:diag_nec_proof2} is the exactness condition of a positive and $(\mathcal{F} \mathcal{F})'$-exact quadrature formula with weights $\mathbf{p} = [p_1,\dots,p_N]^T$; see \cref{def:QF_pos_exact}. 
\end{proof}

\cref{thm:diag_nec} gives a necessary condition for the existence of a diagonal-norm FSBP operator. 
This necessary condition is the existence of a positive and $(\mathcal{F} \mathcal{F})'$-exact quadrature formula. 
\cref{thm:diag_suf} below reveals that this condition is not just necessary but also sufficient. 
Before presenting this theorem, we introduce the concept of Vandermonde-like matrices. 
Let $\{f_1,\dots,f_K\}$ be a basis of the $K$-dimensional function space $\mathcal{F} \subset C^1$. 
Evaluating the basis functions at the grid points $\mathbf{x}$ and writing these function values in a matrix, we get the Vandermonde-like matrix 
\begin{equation}\label{eq:Vandermonde}
	F = [f_1(\mathbf{x}), \dots, f_K(\mathbf{x})] =
		\begin{bmatrix} 
			f_1(x_1) & \dots & f_K(x_1) \\ 
			\vdots & & \vdots \\ 
			f_1(x_N) & \dots & f_K(x_N)
		\end{bmatrix}.
\end{equation}
\cref{thm:diag_suf} will require this matrix to have linearly independent columns.\footnote{%
$F$ having linearly independent columns is equivalent to the grid points $\mathbf{x}$ being $\mathcal{F}$-unisolvent, which is an important concept in the context of multivariate approximation theory \cite{wendland2004scattered,fornberg2015primer,glaubitz2020shock} and numerical integration \cite{glaubitz2020stableCFs,glaubitz2021constructing,glaubitz2021towardsRBF}.} 
This will ensure that $F$ can be converted to an invertible matrix by appending an appropriate set of vectors (assuming $K \leq N$).

\begin{theorem}[Sufficient condition for diagonal-norm FSBP operators]\label{thm:diag_suf} 
	Let $\mathcal{F} \subset C^1$ and assume that the matrix $F$ in \cref{eq:Vandermonde} has linearly independent columns. 
	If there is a positive and $(\mathcal{F} \mathcal{F})'$-exact quadrature formula $I_{\mathbf{x}, \mathbf{p}}$, then there exists an $\mathcal{F}$-based SBP operator $D = P^{-1}Q$ with a positive diagonal-norm matrix given by $P = \diag(\mathbf{p})$. 
\end{theorem}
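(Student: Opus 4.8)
The plan is to set $P = \diag(\mathbf{p})$, which is symmetric positive definite because the quadrature weights satisfy $p_n > 0$, so property (ii) of \cref{def:SBP_general} holds automatically, and then to construct a matrix $Q$ meeting the other two requirements. Write $F = [f_1(\mathbf{x}),\dots,f_K(\mathbf{x})]$ for the Vandermonde-like matrix of \cref{eq:Vandermonde} and $F' = [f_1'(\mathbf{x}),\dots,f_K'(\mathbf{x})]$ for its derivative analogue. By linearity, the exactness requirement (i) of \cref{def:SBP_general} is equivalent to the matrix identity $P^{-1}QF = F'$, i.e.\ $QF = PF'$. Thus I must produce an $N \times N$ matrix $Q$ with
\[
	QF = PF' \qquad \text{and} \qquad Q + Q^T = B .
\]
The natural device is the splitting $Q = \tfrac12 B + S$: since $B$ is symmetric, the condition $Q + Q^T = B$ collapses to $S = -S^T$, and the exactness condition becomes $SF = R$ with $R := PF' - \tfrac12 BF$. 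Everything therefore reduces to finding a skew-symmetric $S$ solving $SF = R$.

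Next I would identify the compatibility condition that governs solvability and verify that the quadrature hypothesis supplies it. If a skew-symmetric $S$ with $SF = R$ exists, then $F^T R = F^T S F$ is forced to be skew-symmetric, since $(F^T S F)^T = F^T S^T F = -F^T S F$. I check $F^T R + R^T F = 0$ directly: expanding $R$ gives $F^T R + R^T F = F^T P F' + (F')^T P F - F^T B F$, whose $(i,j)$ entry equals $I_{\mathbf{x},\mathbf{p}}\!\left[(f_i f_j)'\right] - I\!\left[(f_i f_j)'\right]$ once one uses $x_1 = x_L$, $x_N = x_R$, and the form of $B$. This vanishes precisely because $I_{\mathbf{x},\mathbf{p}}$ is $(\mathcal{F}\mathcal{F})'$-exact (\cref{def:QF_pos_exact}) and $(f_i f_j)' \in (\mathcal{F}\mathcal{F})'$. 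This is the computation behind \cref{thm:diag_nec}, now read in the reverse direction.

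The main obstacle, and the genuinely new step, is the converse linear-algebra fact that skew-symmetry of $F^T R$ is not only necessary but sufficient for the existence of a skew-symmetric $S$ with $SF = R$, provided $F$ has full column rank. Here I would invoke the hypothesis on $F$ (cf.\ the discussion preceding the theorem) to extend $F$ to an invertible matrix $\tilde F = [F \mid G]$ by appending columns. Passing to $\tilde F$-coordinates through $\Sigma := \tilde F^T S \tilde F$, skew-symmetry of $S$ is equivalent to skew-symmetry of $\Sigma$, while $SF = R$ becomes---using that $\tilde F^{-1}F$ consists of the first $K$ columns of the identity---the requirement that the first $K$ columns of $\Sigma$ equal the $N \times K$ matrix $\tilde F^T R$, whose top $K \times K$ block is $F^T R$. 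Prescribing those columns forces the first $K$ rows of $\Sigma$ by skew-symmetry, and the sole consistency obstruction is that the overlapping top-left $K \times K$ block be skew-symmetric; but that block is exactly $F^T R$, whose skew-symmetry was just established. I then fill the remaining bottom-right $(N-K) \times (N-K)$ block with any skew-symmetric matrix (zero, say) to obtain a genuine skew $\Sigma$, and set $S = \tilde F^{-T} \Sigma \tilde F^{-1}$.

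Finally I would assemble $Q = \tfrac12 B + S$ and confirm the three defining properties of \cref{def:SBP_general}: $P = \diag(\mathbf{p})$ is symmetric positive definite; $Q + Q^T = B$ holds by construction of $S$; and $QF = \tfrac12 BF + SF = \tfrac12 BF + R = PF'$, so $D = P^{-1}Q$ satisfies $DF = F'$, i.e.\ $Df(\mathbf{x}) = f'(\mathbf{x})$ for every $f \in \mathcal{F}$. I expect the index bookkeeping in the compatibility computation and the coordinate change through $\tilde F$ to be routine; the conceptual crux is recognizing that the whole construction hinges on the translation ``skew-symmetry of $F^T R$ $\Leftrightarrow$ existence of a skew-symmetric $S$ with $SF = R$,'' and that $(\mathcal{F}\mathcal{F})'$-exactness of the quadrature is exactly what renders $F^T R$ skew.
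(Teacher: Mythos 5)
Your proposal is correct and takes essentially the same approach as the paper: the paper's proof of \cref{thm:diag_suf} simply invokes the argument of \cite[Proof of Theorem 3.3]{hicken2016multidimensional}, which rests on exactly the decomposition $Q = \tfrac{1}{2}B + Q_A$ and the reduction to $Q_A F = P F_x - \tfrac{1}{2} B F$ recorded in \cref{eq:Q_QA,eq:exactness_QA}, followed by the same skew-symmetric completion of $S$ through an invertible extension of $F$ that you carry out. Your write-up merely supplies in full the details (the skew-symmetry of $F^T R$ via $(\mathcal{F}\mathcal{F})'$-exactness and the block construction of $\Sigma$) that the paper delegates to the citation.
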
 

\begin{proof} 
	It is easy to verify that the same arguments as in \cite[Proof of Theorem 3.3]{hicken2016multidimensional}, where polynomial-based SBP operators were addressed, can be used. 
\end{proof}

Combining \cref{thm:diag_nec,thm:diag_suf}, we are now able to formulate the following characterization of existence of diagonal-norm FSBP operators. 

\begin{corollary}\label{cor:diag_char} 	
	Let $\mathcal{F} \subset C^1$ be a finite-dimensional function space and assume that the matrix $F$ in \cref{eq:Vandermonde} has linearly independent columns.
	Then there exists an $\mathcal{F}$-based SBP operator $D=P^{-1}Q$ with a positive definite diagonal-norm matrix $P=\diag(\mathbf{p})$ if and only if there exists a positive and $(\mathcal{F} \mathcal{F})'$-exact quadrature formula $I_{\mathbf{x}, \mathbf{p}}$. 
\end{corollary}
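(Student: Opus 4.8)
The plan is to derive the corollary as an immediate synthesis of the two preceding results, \cref{thm:diag_nec} and \cref{thm:diag_suf}, by establishing the two implications of the biconditional separately and being careful to track which hypotheses each direction actually consumes.

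For the forward implication ($\Rightarrow$), I would assume that an $\mathcal{F}$-based SBP operator $D = P^{-1}Q$ with positive definite diagonal-norm matrix $P = \diag(\mathbf{p})$ exists, and then simply invoke \cref{thm:diag_nec}. That theorem states precisely that under these hypotheses the diagonal entries $\mathbf{p}$ are the weights of a positive and $(\mathcal{F}\mathcal{F})'$-exact quadrature formula $I_{\mathbf{x},\mathbf{p}}$ supported on the same grid points $\mathbf{x}$. I would emphasize that this direction does not use the linear-independence assumption on the Vandermonde-like matrix $F$ at all; it rests only on the compatibility relation \cref{eq:comp_cond} coming from \cref{lem:gen_nec}.

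For the backward implication ($\Leftarrow$), I would assume the existence of a positive and $(\mathcal{F}\mathcal{F})'$-exact quadrature formula $I_{\mathbf{x},\mathbf{p}}$ and appeal to \cref{thm:diag_suf}. Here the hypothesis that the matrix $F$ in \cref{eq:Vandermonde} has linearly independent rows becomes essential, since it is exactly what allows $F$ to be completed to an invertible matrix in the construction underlying \cref{thm:diag_suf}, and thereby to produce a valid $Q$ satisfying (iii) of \cref{def:SBP_general}. Setting $P = \diag(\mathbf{p})$, \cref{thm:diag_suf} then directly yields the desired $\mathcal{F}$-based SBP operator $D = P^{-1}Q$, completing the equivalence.

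Since both directions are direct appeals to theorems already in hand, I do not expect a genuine mathematical obstacle in the corollary itself; the real content lives in \cref{thm:diag_nec} and \cref{thm:diag_suf}. The only point requiring care is bookkeeping: ensuring that the quadrature nodes are identified with the SBP grid points $\mathbf{x}$ and the quadrature weights with the diagonal $\mathbf{p}$ of $P$ consistently across the two theorems, and flagging that the linear-independence hypothesis on $F$ is needed only for the sufficiency direction. In that sense the corollary is essentially a packaging statement that records the equivalence cleanly.
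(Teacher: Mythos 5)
Your proposal is correct and follows exactly the paper's own argument: the forward direction is \cref{thm:diag_nec} and the backward direction is \cref{thm:diag_suf}, with the nodes and weights identified consistently across both. Your extra observation that the linear-independence hypothesis on $F$ is consumed only by the sufficiency direction is accurate and a nice clarification, though the paper does not single it out.
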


\begin{proof} 
	\cref{cor:diag_char} consists of two parts: 
	(i) If there exists an $\mathcal{F}$-based SBP operator $D=P^{-1}Q$ with positive definite diagonal-norm matrix $P$, then there also exists a positive and $(\mathcal{F} \mathcal{F})'$-exact quadrature formula supported on the same grid points. 
	Moreover, the weights of this quadrature formula are given by the positive diagonal elements of $P$. 
	This part was proved by \cref{thm:diag_nec}. 
	(ii) If there exists a positive and $(\mathcal{F} \mathcal{F})'$-exact quadrature formula and the Vandermonde-like matrix $F$ in \cref{eq:Vandermonde} has linearly independent columns, then there also exists an $\mathcal{F}$-based SBP operator $D=P^{-1}Q$ with positive definite diagonal-norm matrix $P$. 
	Furthermore, the FSBP operator $D=P^{-1}Q$ is defined on the same grid points as the quadrature formula and the diagonal-norm matrix $P$ contains the positive quadrature weights as its diagonal elements. 
\end{proof}

We end this section by putting \cref{cor:diag_char} into the context of classical poly\-nomial-based SBP operators. 

\begin{remark} 
	\cref{cor:diag_char} is well-known for polynomial-based SBP operators \cite{svard2014review,fernandez2014review}. 
	Recall from \cref{expl:F_poly} that for $\mathcal{F} = \mathcal{P}_d$, we have $(\mathcal{F}\mathcal{F})' = \mathcal{P}_{2d-1}$. 
	Hence, in this case, \cref{cor:diag_char} states that a $\mathcal{P}_d$-based SBP operator (usually called an SBP operator of degree $d$) exists if and only if there is a positive and $\mathcal{P}_{2d-1}$-exact quadrature formula (usually called a quadrature formula of degree $2d-1$). 
	There are many such quadrature formulas. 
	Yet, the minimal number of grid points (including the boundary points), given by $N = d+1$, is achieved by the Gauss--Lobatto formula. 
\end{remark}
 
\section{Construction of FSBP operators and associated quadrature formulas} 
\label{sec:construction} 

We again restrict the discussion to diagonal-norm FSBP operators on a grid $\mathbf{x} \in \R^N$ in $[x_L,x_R]$ with $x_1 = x_L$ and $x_N = x_R$. 
Furthermore, we assume that the grid points $\mathbf{x}$ yield a Vandermonde-like matrix $F$ in \cref{eq:Vandermonde} with linearly independent columns.

\subsection{Construction of FSBP operators}
\label{sub:constr_FSBP}

To start, we assume that we have already found a positive and $(\mathcal{F}\mathcal{F})'$-exact quadrature formula $I_{\mathbf{x}, \mathbf{p}}$. 
Then we can construct a diagonal-norm $\mathcal{F}$-based SBP operator $D = P^{-1} Q$ by using the strategy proposed in \cite{hicken2016multidimensional} for polynomial-based SBP operators. 
For completeness, we revisit the most important steps. 
First, choose 
\begin{equation} 
	P = \diag(\mathbf{p}). 
\end{equation}
Next, note that by decomposing $Q$ into its symmetric and anti-symmetric part, $Q_S$ and $Q_A$, (iii) in \cref{def:SBP_general} yields 
\begin{equation}\label{eq:Q_QA} 
	Q = Q_A + \frac{1}{2} B.
\end{equation}
Furthermore, the accuracy condition (i) in \cref{def:SBP_general} implies 
\begin{equation}\label{eq:exactness_QA} 
	Q_A F = P F_x - \frac{1}{2} B F.
\end{equation}
Here, $F_x$ is the Vandermonde-like matrix for the derivatives of the basis elements $f_1,\dots,f_K$ of $\mathcal{F}$, defined analogously to \cref{eq:Vandermonde} as ${F_x = [f_1'(\mathbf{x}), \dots, f_K'(\mathbf{x})]}$. 
It remains to construct an anti-symmetric $Q_A$ that satisfies \cref{eq:exactness_QA}. 
Once such a $Q_A$ is found, we get $Q$ by \cref{eq:Q_QA} and the FSBP operator as $D = P^{-1} Q$. 
To this end, \cref{eq:exactness_QA} can be recast as a linear system 
\begin{equation}\label{eq:LS_q} 
 	A \mathbf{q} = \mathbf{y}, 
\end{equation}
where $\mathbf{q}$ denotes the vector that contains the strictly lower part of $Q_A$: 
\begin{equation} 
	q\left( \frac{[i-2][i-1]}{2} + j \right) = (Q_A)_{i,j}, \quad 
	2 \leq i \leq n, \ 1 \leq j < i.
\end{equation}
The strictly upper part of $Q_A$ is then obtained according to $(Q_A)_{j,i} = - (Q_A)_{i,j}$, and the diagonal elements are set to zero. 
Note that \cref{eq:LS_q} consists of $NK$ equations for $(N-1)N/2$ unknowns. 
Although this might result in more equations than unknowns (if $K > (N-1)/2$), \cref{thm:diag_suf} ensures that there exists \emph{at least one solution}. 
Indeed, \cref{eq:LS_q} can have infinitely many solutions. 
In this case, the remaining degrees of freedom can be used to optimize the SBP operator based on different criteria, such as the truncation error and spectral properties \cite{strand1994summation,svard2014review,fernandez2014review}. 
In our implementation, we select the unique least-squares solution \cite{golub2012matrix} with minimal Euclidean norm $\|\cdot\|_{\ell^2}$ among all possible solutions of \cref{eq:LS_q} since this is computationally convenient.

\subsection{Construction of positive and exact quadrature formulas}
\label{sub:constr_QF}

% QFs for polynomials 
We start by noting that for polynomial-based SBP operators, with $\mathcal{F} = \mathcal{P}_d$, $( \mathcal{F}\mathcal{F} )'$-exact quadrature formulas are the ones which have degree of exactness $2d-1$ or larger. 
Positive quadrature formulas with degree of exactness $2d-1$ (that include the boundary points) are given, for instance, by 
(1) Gauss--Lobatto formulas, which come with a minimal number of $N = d$ grid points; 
(2) compound Newton--Cotes formulas, which can be used on equidistant grids, but are positive only for a limited degree of exactness; and 
(3) least-squares formulas, which can be used on almost arbitrary sets of grid points and any degree of exactness, but with $N$ being proportional to $d^2$, rather than $d$ as for (1), making them less efficient.   
See one of the monographs \cite{gautschi1997numerical,davis2007methods,brass2011quadrature} for (1) and (2), and \cite{huybrechs2009stable,glaubitz2020stableDG,glaubitz2020stableQF} for (3). 
% QFs for general 
Besides polynomial spaces, quadrature theory might also be considered as well-developed for some other function spaces, such as trigonometric functions \cite{mysovskikh2001cubature}, exponential functions \cite{cordero1996gaussian}, and radial basis functions \cite{sommariva2021rbf,glaubitz2021towardsRBF}. 
For instance, the composite trapezoidal rule gives a positive and $(\mathcal{F} \mathcal{F})'$-exact quadrature formula on equidistant points for trigonometric function spaces $\mathcal{F}$. 
For all other function spaces considered here we were able to use the least-squares approach to construct positive and $(\mathcal{F} \mathcal{F})'$-exact quadrature formulas. 
It was proven in \cite{glaubitz2021constructing} that a positive and $(\mathcal{F} \mathcal{F})'$-exact least-squares quadrature is found whenever (i) $(\mathcal{F} \mathcal{F})'$ includes constants and (ii) sufficiently many grid points are used. 
More precisely, it was numerically observed in \cite{glaubitz2021constructing} that for a positive and $(\mathcal{F} \mathcal{F})'$-exact least-squares quadrature, $N$ has to be proportional to $L^2$ with $L$ being the dimension of $(\mathcal{F} \mathcal{F})'$. 
The proof presented in \cite{glaubitz2021constructing} is not restricted to equidistant points, but also allows for other classes of equidistributed points, including low-discrepancy points used in quasi-Monte Carlo methods. 
Moreover, random points were addressed in \cite{migliorati2022stable}.  
\section{Some examples} 
\label{sec:examples}

For illustrative purposes, we subsequently address a few special cases of non-polynomial approximation spaces.

\subsection{Trigonometric approximation spaces}

% Trigonometric function space 
Consider the trigonometric approximation space 
\begin{equation}\label{eq:trig_space}
	\mathcal{T}_d 
		= \Span\{ \, 1, \sin( k \omega x ), \cos( k \omega x ) \mid k=1,\dots,d \, \}
\end{equation} 
with frequency $\omega = 2 \pi [x_R - x_L]$. 
The dimension of this approximation space is ${K = 1 + 2d}$. 
Using elemental product-to-sum identities \cite[4.3.31--33]{abramowitz1972handbook}, the product space $\mathcal{F}\mathcal{F}$ and its derivative space $(\mathcal{F}\mathcal{F})'$ are respectively given by 
\begin{equation} 
	\mathcal{T}_d \mathcal{T}_d = \mathcal{T}_{2d}, \quad 
	(\mathcal{T}_d \mathcal{T}_d)' = \mathcal{T}_{2d} \setminus \Span\{1\}.
\end{equation} 
Consequently, $\mathcal{T}_d \mathcal{T}_d$ has dimension $4d+1$ and $(\mathcal{T}_d \mathcal{T}_d)'$ has dimension $4d$, since the constant has been removed.
It is worth noting that, in particular,  
\begin{equation}\label{eq:DFF_vs_FF_trig} 
	(\mathcal{T}_d \mathcal{T}_d)' \subset \mathcal{T}_{2d}. 
\end{equation} 
It now remains to find a positive and $(\mathcal{T}_d \mathcal{T}_d)'$-exact quadrature formula. 
To this end, it is convenient to recall that $\mathcal{T}_{2d}$ is exactly integrated by the  composite trapezoidal rule using $N=2d+1$ equidistant points \cite[Chapter 3.2.1]{gautschi1997numerical}. 
In particular, because of \cref{eq:DFF_vs_FF_trig}, the composite trapezoidal rule using $N=2d+1$ equidistant points gives us a positive and $(\mathcal{T}_d \mathcal{T}_d)'$-exact quadrature formula. 
That said, we still must use a larger number of $N = 2d+2$ equidistant grid points to construct the corresponding FSBP operator, since $F$ in \cref{eq:Vandermonde} only has linearly independent columns for $N > 2d+1$, which is required for the sufficient condition \cref{thm:diag_suf} to hold. 

% Special case: QF and FSBP operator 
On $[x_L,x_R] = [0,1]$ and for $d=1$, the trigonometric approximation space \cref{eq:trig_space} is given by ${\mathcal{T}_1 = \Span\{ \, 1, \sin( \omega x ), \cos( \omega x ) \, \}}$ and has dimension $K=3$. 
Using $N=K+1=4$ equidistant grid points, the composite trapezoidal rule is positive and $(\mathcal{T}_1\mathcal{T}_1)'$-exact. 
The grid points and quadrature weights are 
\begin{equation} 
\begin{aligned}
	\mathbf{x} = \left[0, \frac{1}{3}, \frac{2}{3}, 1 \right]^T, \quad 
	\mathbf{w} = \left[ \frac{1}{6}, \frac{1}{3}, \frac{1}{3}, \frac{1}{6} \right]^T,
\end{aligned}
\end{equation} 
The corresponding matrices $Q$ and $D$ of the FSBP operator $D = P^{-1} Q$ obtained from the construction procedure described in \cref{sub:constr_FSBP} are  
\begin{equation} 
\renewcommand*{\arraystretch}{1.4} 
	Q \approx
	\begin{bmatrix} 
		-\frac{1}{2} & \frac{3}{5} & -\frac{3}{5} & \frac{1}{2}\\ -\frac{3}{5} & 0 & \frac{121}{100} & -\frac{3}{5}\\ \frac{3}{5} & -\frac{121}{100} & 0 & \frac{3}{5}\\ -\frac{1}{2} & \frac{3}{5} & -\frac{3}{5} & \frac{1}{2} 
	\end{bmatrix}, \quad 
	D \approx
	\begin{bmatrix} 
		-3 & \frac{363}{100} & -\frac{363}{100} & 3\\ -\frac{181}{100} & 0 & \frac{363}{100} & -\frac{181}{100}\\ \frac{181}{100} & -\frac{363}{100} & 0 & \frac{181}{100}\\ -3 & \frac{363}{100} & -\frac{363}{100} & 3 
	\end{bmatrix}.
\end{equation}
Here, we have rounded the numbers to the second decimal place.

\subsection{Exponential approximation spaces}
\label{sub:exp_FSBP}

Consider the exponential approximation space 
\begin{equation}\label{eq:exp_space}
	\mathcal{E}_d 
		= \mathcal{P}_{d-1} \cup \Span\{ \, e^{x} \, \} 
		= \Span\{ \, x^k, e^{x} \mid k=0,1,\dots,d-1 \, \} 
\end{equation} 
with dimension ${K = d + 1}$. 
The product space $\mathcal{F}\mathcal{F}$ and its derivative space $(\mathcal{F}\mathcal{F})'$ are respectively given by 
\begin{equation} 
\begin{aligned} 
	\mathcal{E}_d \mathcal{E}_d 
		& = \Span\left\{ \, x^k, x^l e^{x}, e^{2x} \mid k=0,\dots,2d-2,\ l=0,\dots,d-1 \, \right\}, \\
	(\mathcal{E}_d \mathcal{E}_d)' 
		& = \Span\left\{ \, x^k, x^l e^{x}, e^{2x} \mid k=0,\dots,2d-3,\ l=0,\dots,d-1 \, \right\}. 
\end{aligned}
\end{equation} 
Consequently, $\mathcal{E}_d \mathcal{E}_d$ has dimension $3d$ and $(\mathcal{E}_d \mathcal{E}_d)'$ has dimension $3d-1$.
It now remains to find a positive and $(\mathcal{E}_d \mathcal{E}_d)'$-exact quadrature formula. 
We were able to find such quadrature formulas using the least-squares approach \cite{huybrechs2009stable,glaubitz2021constructing} on equidistant points. 

% Special case: QF and FSBP operator 
On $[x_L,x_R] = [0,1]$ and for $d=2$, the three-dimensional exponential approximation space \cref{eq:exp_space} is given by ${\mathcal{E}_2 = \Span\{ \, 1, x, e^{x} \, \}}$. 
Using $N=5$ equidistant grid points, we found the least-squares quadrature formula with the following points and weights to be positive and $(\mathcal{E}_2\mathcal{E}_2)'$-exact:
\begin{equation} 
\begin{aligned}
	\mathbf{x} = \left[0, \frac{1}{4}, \frac{1}{2}, \frac{3}{4}, 1 \right]^T, \quad 
	\mathbf{w} \approx \left[ \frac{2}{25}, \frac{9}{25}, \frac{3}{25}, \frac{9}{25}, \frac{2}{25} \right]^T,
\end{aligned}
\end{equation} 
where we have again rounded the numbers to the second decimal place. 
The corresponding matrices $Q$ and $D$ of the FSBP operator $D = P^{-1} Q$ obtained from the construction procedure described in \cref{sub:constr_FSBP} are  
\begin{equation} 
\renewcommand*{\arraystretch}{1.3} 
\resizebox{.9\textwidth}{!}{$\displaystyle % restart math mode!
	Q \approx
	\begin{bmatrix} 
		-\frac{1}{2} & \frac{13}{20} & -\frac{1}{25} & -\frac{19}{100} & \frac{7}{100}\\ -\frac{13}{20} & 0 & \frac{8}{25} & \frac{13}{25} & -\frac{19}{100}\\ \frac{1}{25} & -\frac{8}{25} & 0 & \frac{8}{25} & -\frac{1}{25}\\ \frac{19}{100} & -\frac{13}{25} & -\frac{8}{25} & 0 & \frac{13}{20}\\ -\frac{7}{100} & \frac{19}{100} & \frac{1}{25} & -\frac{13}{20} & \frac{1}{2} 
	\end{bmatrix}, \ \  
	D \approx
	\begin{bmatrix} 
		-\frac{329}{50} & \frac{859}{100} & -\frac{23}{50} & -\frac{127}{50} & \frac{49}{50}\\ -\frac{9}{5} & 0 & \frac{22}{25} & \frac{29}{20} & -\frac{53}{100}\\ \frac{7}{25} & -\frac{257}{100} & 0 & \frac{129}{50} & -\frac{29}{100}\\ \frac{53}{100} & -\frac{29}{20} & -\frac{89}{100} & 0 & \frac{181}{100}\\ -\frac{49}{50} & \frac{249}{100} & \frac{12}{25} & -\frac{213}{25} & \frac{653}{100} 
	\end{bmatrix}, 
$}
\end{equation}
where we have again rounded the numbers to the second decimal place.

\subsection{Radial basis functions}

% Special case: QF and FSBP operator 
Consider the three-dimensional function space 
\begin{equation} 
	\mathcal{C}(X_3) 
		= \Span\{ \, c_1, c_2, c_3 \, \} 
\end{equation} 
on $[x_L,x_R] = [0,1]$ with 
\begin{equation}\label{eq:cardinal_functions}
\begin{aligned}
	c_1(x) 
		& = \frac{1}{2} |x|^3 - 2 | x - 1/2 |^3 + \frac{3}{2} |x-1|^3 - \frac{1}{4}, \\ 
	c_2(x) 
		& = -2 |x|^3 + 4 | x - 1/2 |^3 - 2 |x-1|^3 + \frac{3}{2}, \\ 
	c_3(x) 
		& = \frac{3}{2} |x|^3 - 2 | x - 1/2 |^3 + \frac{1}{2} |x-1|^3 - \frac{1}{4}. 
\end{aligned} 
\end{equation} 
The function space $\mathcal{C}(X_3) $ corresponds to the linear space spanned by the cubic radial basis function (RBF) interpolants using the centers $X_3 = \{0,1/2,1\}$ augmented with a constant. 
The basis elements $c_1$, $c_2$, and $c_3$ in \cref{eq:cardinal_functions} are the associated cardinal functions.
% Product space
The product space $\mathcal{C}(X_3)\mathcal{C}(X_3)$ and its derivative space $(\mathcal{C}(X_3)\mathcal{C}(X_3))'$ are respectively given by 
\begin{equation} 
\begin{aligned} 
	\mathcal{C}(X_3) \mathcal{C}(X_3) 
		& = \Span\{ \, c_1, c_2, c_3, c_1^2, c_2^2, c_3^2, c_1 c_2, c_1 c_3, c_2 c_3 \, \} \\ 
	(\mathcal{C}(X_3) \mathcal{C}(X_3))' 
		& = \Span\{ \, c_1', c_2', c_3', (c_1^2)', (c_2^2)', (c_3^2)', (c_1 c_2)', (c_1 c_3)', (c_2 c_3)' \, \}. 
\end{aligned}
\end{equation} 
Using $N=4$ equidistant grid points, we found the least-squares quadrature formula (see \cite{huybrechs2009stable,glaubitz2021constructing}) with the following points and weights to be positive and $(\mathcal{C}(X_3) \mathcal{C}(X_3))'$-exact:
\begin{equation} 
\begin{aligned}
	\mathbf{x} = \left[0, \frac{1}{3}, \frac{2}{3}, 1 \right]^T, \quad 
	\mathbf{w} = \left[ \frac{16}{129}, \frac{81}{215}, \frac{81}{215}, \frac{16}{129} \right]^T.
\end{aligned}
\end{equation} 
The corresponding matrices $Q$ and $D$ of the FSBP operator $D = P^{-1} Q$ obtained from the construction procedure described in \cref{sub:constr_FSBP} are  
\begin{equation} 
\renewcommand*{\arraystretch}{1.4} 
	Q \approx
	\begin{bmatrix} 
		-\frac{1}{2} & \frac{59}{100} & -\frac{3}{20} & \frac{3}{50}\\ -\frac{59}{100} & 0 & \frac{37}{50} & -\frac{3}{20}\\ \frac{3}{20} & -\frac{37}{50} & 0 & \frac{59}{100}\\ -\frac{3}{50} & \frac{3}{20} & -\frac{59}{100} & \frac{1}{2}  
	\end{bmatrix}, \quad 
	D \approx
	\begin{bmatrix} 
		-\frac{403}{100} & \frac{473}{100} & -\frac{121}{100} & \frac{51}{100}\\ -\frac{39}{25} & 0 & \frac{49}{25} & -\frac{2}{5}\\ \frac{2}{5} & -\frac{49}{25} & 0 & \frac{39}{25}\\ -\frac{51}{100} & \frac{121}{100} & -\frac{473}{100} & \frac{403}{100} 
	\end{bmatrix}, 
\end{equation}
where we have again rounded the numbers to the second decimal place. 
While the above example only discusses cubic RBF interpolants, we have investigated the construction of FSBP operators for more general RBFs in \cite{glaubitz2022energy}. 
\section{Numerical results} 
\label{sec:num_tests} 

We used the explicit strong stability preserving (SSP) Runge--Kutta (RK) method of third order using three stages (SSPRK(3,3)) \cite{shu1988total} for all subsequent numerical tests. 
For simplicity, we only consider FSBP operators on equidistant grid points. 
That said, while these results are not reported here, we are also able to construct FSBP operators on nonequidistant points. 
We report on some of these results in \cite{glaubitz2022energy}, where FSBP operators on random and Halton points are considered. 
The MATLAB code used to generate the numerical tests presented here is open access and can be found on GitHub.\footnote{See \url{https://github.com/jglaubitz/FSBP}}

\subsection{Homogeneous linear advection equation}

Consider the homogeneous linear advection equation 
\begin{equation}\label{eq:linear_adv} 
\begin{aligned} 
	\partial_t u + \partial_x u & = 0, \quad && 0 < x < 1, \\ 
	u(x,0) & = \cos(4\pi x) + 0.5 \sin(40 \pi x), \quad && 0 \leq x \leq 1, \\ 
	u(0,t) & = u(1,t), \quad && t \geq 0,
\end{aligned}
\end{equation} 
with periodic boundary conditions and highly oscillatory initial data. 

\begin{figure}[tb]
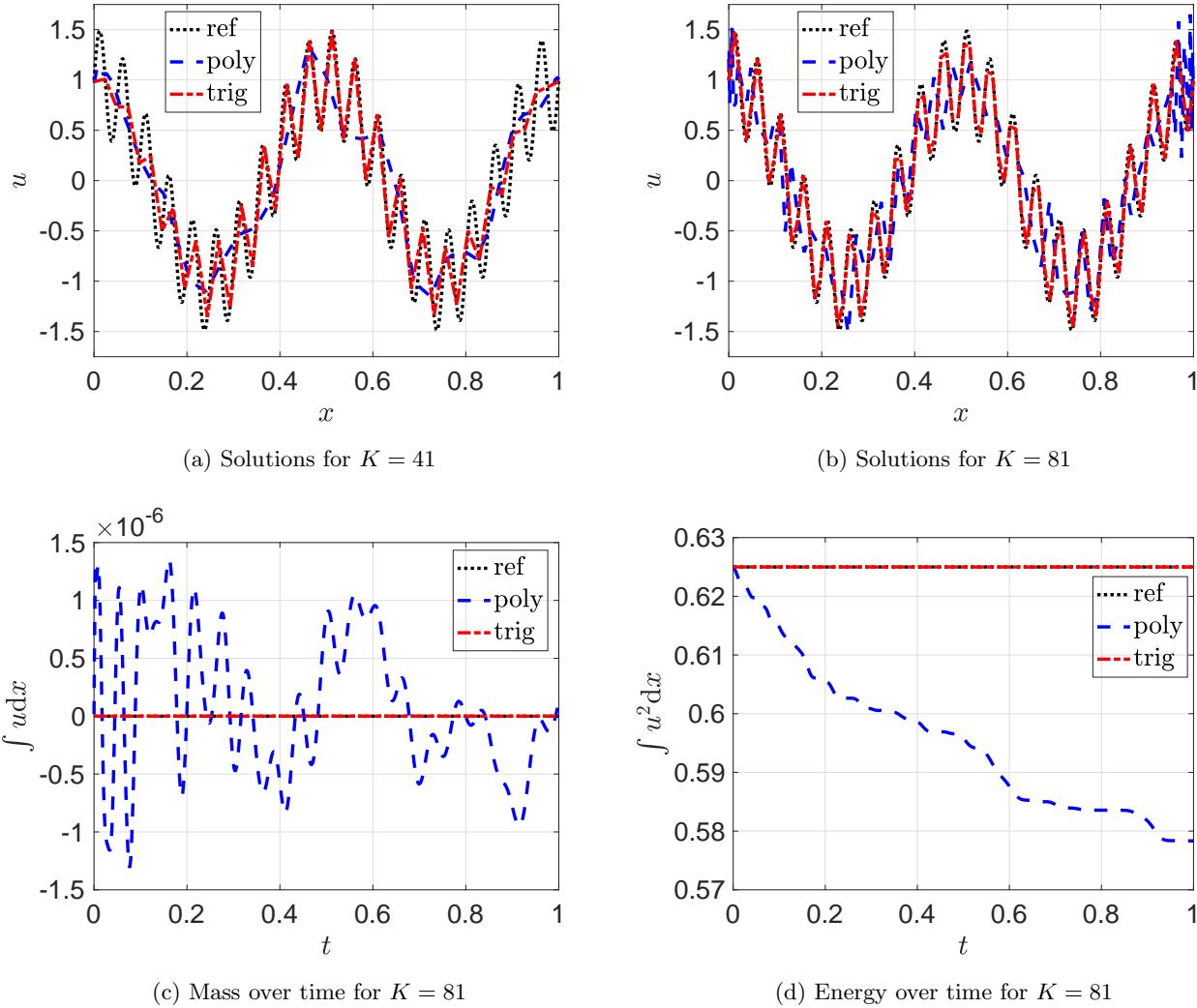

	\centering 
	\begin{subfigure}[b]{0.49\textwidth}
		\includegraphics[width=\textwidth]{%
      		figures/linear_SAT_K41} 
    		\caption{Solutions for $K=41$}
    		\label{fig:linear_sol_K41}
  	\end{subfigure}%
	~
  	\begin{subfigure}[b]{0.49\textwidth}
		\includegraphics[width=\textwidth]{%
      		figures/linear_SAT_K81} 
    		\caption{Solutions for $K=81$}
    		\label{fig:linear_sol_K81}
  	\end{subfigure}%
	\\ 
	\begin{subfigure}[b]{0.49\textwidth}
		\includegraphics[width=\textwidth]{%
      		figures/linear_SAT_K81_mass} 
    		\caption{Mass over time for $K=81$}
    		\label{fig:linear_mass_K81}
  	\end{subfigure}%
	~
  	\begin{subfigure}[b]{0.49\textwidth}
		\includegraphics[width=\textwidth]{%
      		figures/linear_SAT_K81_energy} 
    		\caption{Energy over time for $K=81$}
    		\label{fig:linear_energy_K81}
  	\end{subfigure}%
  	\caption{
  	(Numerical) solutions of the linear advection equation \cref{eq:linear_adv} at time $t=1$ as well as mass and energy over time for $K=81$.
	The numerical solutions respectively correspond to the FSBP-SAT method \cref{eq:lin_discr} using the polynomial (``poly") and trigonometric (``trig") approximation space, $\mathcal{P}_{d}$ and $\mathcal{T}_d$, of dimension $K$. 
  	}
  	\label{fig:linear_sol}
\end{figure}  

\cref{fig:linear_sol} provides a comparison between this exact solution and the numerical solutions at time $t=1$. 
The numerical solutions correspond to the FSBP-SAT method \cref{eq:lin_discr} using the polynomial (``poly") and trigonometric (``trig") approximation space, $\mathcal{P}_{d}$ and $\mathcal{T}_d$. 
In \cref{fig:linear_sol_K41,fig:linear_sol_K81} the approximation spaces were chosen to have dimension $K=41$ and $K=81$, respectively. 
In both cases, we can see that the trigonometric approximation space is better suited to represent the solution. 
\cref{fig:linear_mass_K81,fig:linear_energy_K81} illustrate the corresponding mass ($\int u \intd x$) and energy ($\int u^2 \intd x$) profile over time. 
We note that the solution of the trigonometric-based SBP-SAT scheme is closer to the reference solution than the solution of the polynomial-based SBP-SAT scheme. 

\begin{remark} 
	A result similar to the Weierstrass approximation theorem for polynomials states that a continuous and periodic function can be uniformly approximated by the trigonometric polynomials \cite[Theorem 16]{stone1948generalized}.
\end{remark}

\subsection{Inhomogeneous linear advection problems}

Consider the  following inhomogeneous linear advection problem: 
\begin{equation}\label{eq:linear_inhom} 
\begin{aligned} 
	\partial_t u + \partial_x u & = 2u, \quad && 0 < x < \pi, \\ 
	u(x,0) & = 1, \quad && 0 \leq x \leq \pi, \\ 
	u(0,t) & = 1, \quad && t \geq 0,
\end{aligned} 
\end{equation} 
with exact steady state solution $u(x) = e^{2x}$. 
The steady state solution can be expected to be better approximated using an exponential rather than a polynomial approximation space.  

\begin{figure}[tb]
	\centering 
	\begin{subfigure}[b]{0.49\textwidth}
		\includegraphics[width=\textwidth]{%
      		figures/linear_source_SAT_K3_I3} 
    		\caption{Solutions for $I=3$ blocks}
    		\label{fig:linear_source_SAT_K3_I3}
  	\end{subfigure}%
	~
	\begin{subfigure}[b]{0.49\textwidth}
		\includegraphics[width=\textwidth]{%
      		figures/linear_source_SAT_K3_I6} 
    		\caption{Solutions for $I=6$ blocks}
    		\label{fig:linear_source_SAT_K3_I6}
  	\end{subfigure}%
	\\ 
	\begin{subfigure}[b]{0.49\textwidth}
		\includegraphics[width=\textwidth]{%
      		figures/linear_source_SAT_L2error_K3} 
    		\caption{$\|\cdot\|_2$-errors}
    		\label{fig:linear_source_SAT_L2error_K3}
  	\end{subfigure}%
	~
  	\begin{subfigure}[b]{0.49\textwidth}
		\includegraphics[width=\textwidth]{%
      		figures/linear_source_SAT_maxerror_K3} 
    		\caption{$\|\cdot\|_{\infty}$-errors}
    		\label{fig:linear_source_SAT_maxerror_K3}
  	\end{subfigure}%
  	\caption{ 
	(Numerical) solutions and the corresponding errors for the inhomogeneous linear advection problem \cref{eq:linear_inhom} at $t=3.5$. 
	We used a multi-block FSBP-SAT scheme with a polynomial and exponential approximation space, $\mathcal{P}_2 = \Span\{1,x,x^2\}$ and $\mathcal{E}_2 = \Span\{1,x,e^x\}$, respectively.  
  	}
  	\label{fig:linear_inhom}
\end{figure}  

\cref{fig:linear_inhom} shows the (numerical) solutions (\cref{fig:linear_source_SAT_K3_I3,fig:linear_source_SAT_K3_I6}) and their errors (\cref{fig:linear_source_SAT_L2error_K3,fig:linear_source_SAT_maxerror_K3}) at time $t=3.5$.  
The numerical solutions were computed using a multi-block FSBP-SAT method with a three-dimensional polynomial and exponential approximation space $\mathcal{P}_2 = \Span\{1,x,x^2\}$ and $\mathcal{E}_2 = \Span\{1,x,e^x\}$, respectively. 
\cref{fig:linear_source_SAT_K3_I3,fig:linear_source_SAT_K3_I6} illustrate the results using $I=3$ and $I=6$ uniform blocks. 
The polynomial SBP operator was constructed using three Gauss--Lobatto points and weights, while the exponential FSBP operator was described in \cref{sub:exp_FSBP}. 
We find that the exponential approximation space $\mathcal{E}_2$ yields more accurate results than the usual polynomial approximation space $\mathcal{P}_2$. 
In accordance with this, \cref{fig:linear_source_SAT_L2error_K3,fig:linear_source_SAT_maxerror_K3} provide the $\|\cdot\|_2$- and $\|\cdot\|_{\infty}$-errors of both FSBP-SAT methods for an increasing number of uniform blocks, $I$. 
While the convergence rate of the multi-block FSBP-SAT methods seems to be roughly the same for both approximation spaces, the individual error levels are found to be smaller for the $\mathcal{E}_2$-based SBP operator. 
It also seems that the difference between the numerical solutions corresponding to the exponential and polynomial approximation space decreases for large numbers of blocks, $I$. 
This might be explained by the exponential approximation space mostly being of advantage in blocks where the solution rapidly changes, and as the total number of blocks increases the proportion of blocks within which the solution rapidly changes becomes smaller. 

\begin{figure}[tb]
	\centering 
	\begin{subfigure}[b]{0.49\textwidth}
		\includegraphics[width=\textwidth]{%
      		figures/linear_source_SAT2_K3_I3} 
    		\caption{Solutions for $I=3$ blocks}
    		\label{fig:linear_source_SAT2_K3_I3}
  	\end{subfigure}%
	~
	\begin{subfigure}[b]{0.49\textwidth}
		\includegraphics[width=\textwidth]{%
      		figures/linear_source_SAT2_K3_I6} 
    		\caption{Solutions for $I=6$ blocks}
    		\label{fig:linear_source_SAT2_K3_I6}
  	\end{subfigure}%
	\\ 
	\begin{subfigure}[b]{0.49\textwidth}
		\includegraphics[width=\textwidth]{%
      		figures/linear_source_SAT_L2error2_K3} 
    		\caption{$\|\cdot\|_2$-errors}
    		\label{fig:linear_source_SAT_L2error2_K3}
  	\end{subfigure}%
	~
  	\begin{subfigure}[b]{0.49\textwidth}
		\includegraphics[width=\textwidth]{%
      		figures/linear_source_SAT_maxerror2_K3} 
    		\caption{$\|\cdot\|_{\infty}$-errors}
    		\label{fig:linear_source_SAT_maxerror2_K3}
  	\end{subfigure}%
  	\caption{ 
	(Numerical) solutions and the corresponding errors for the inhomogeneous linear advection problem \cref{eq:linear_inhom} at $t=3.5$. 
	We used a multi-block FSBP-SAT scheme with a polynomial and exponential approximation space, $\mathcal{P}_2 = \Span\{1,x,x^2\}$ and $\mathcal{E}_2 = \Span\{1,x,e^x\}$, respectively. 
	Both schemes use the same grid points.
  	}
  	\label{fig:linear_inhom2}
\end{figure}

One might wonder if the increased accuracy of the exponential FSBP operator compared to the polynomial SBP operator observed in \cref{fig:linear_inhom} is due to the difference in the function spaces or can be explained by the exponential FSBP operator using two more grid points per block. 
To address this question, we repeat the above test using a polynomial SBP operator that is exact for $\mathcal{P}_2 = \Span\{1,x,x^2\}$ and uses the same grid points as the exponential FSBP operator. 
The corresponding results can be found in \cref{fig:linear_inhom2} and demonstrate that the FSBP operator again yields more accurate results. 

\begin{remark} 
	The Stone--Weierstrass approximation theorem \cite{stone1948generalized}---a generalization of the Weierstrass approximation theorem for polynomials---ensures uniform $p$-convergence for a large class of non-polynomial function spaces to any continuous function. 
	Moreover, $h$-convergence for some non-polynomial function spaces was addressed, for instance, in \cite{yuan2006discontinuous,yang2016short}. 
	While the exponential function space considered here was not explicitly addressed,  we numerically observe $h$-convergence in \cref{fig:linear_inhom,fig:linear_inhom2}. 
\end{remark}

\subsection{Burgers' equation} 

Consider the nonlinear inviscid Burgers' equation 
\begin{equation}\label{eq:Burgers} 
\begin{aligned} 
	\partial_t u + \partial_x \left( \frac{u^2}{2} \right) & = 0, \quad && 0 < x < 1, \\ 
	u(x,0) & = 1 + \frac{1}{2} \sin(4 \pi x)^3 + \frac{1}{4} \cos(4 \pi x)^5, \quad && 0 \leq x \leq 1, \\ 
	u(0,t) & = u(1,t), \quad && t \geq 0, 
\end{aligned}
\end{equation} 
with periodic boundary conditions and smooth but rapidly varying initial data.
We choose the initial data so that the solution of \cref{eq:Burgers} includes gradients of large magnitude, which can be expected to be better approximated by exponential rather than polynomial approximation spaces.  
Furthermore, we again consider a uniform multi-block FSBP-SAT semi-discretization.
Recall that an energy-stable skew-symmetric FSBP-SAT semi-discretization of \cref{eq:Burgers} is given by 
\begin{equation}\label{eq:Burgers_discr}
	\mathbf{u}_t^{(i)} 
		+ \frac{1}{3} D U^{(i)} \mathbf{u}^{(i)} + \frac{1}{3} U^{(i)}D  \mathbf{u}^{(i)} =  P^{-1}\mathbb{S}^{(i)}, 
		\quad i=1,\dots,I.
\end{equation} 
Here, $I$ denotes the number of uniform blocks, $U^{(i)} = \diag(\mathbf{u})^{(i)}$, and the SAT $\mathbb{S}^{(i)}$ is given by  
\begin{equation}\label{eq:Burgers_SAT}
	\mathbb{S}^{(i)} = [\mathbb{S}_1^{(i)},0,\dots,0]^T, \quad 
	\mathbb{S}_1^{(i)} = - \frac{2}{3} u_1^{(i)} (u_1^{(i)} - g_L^{(i)})
\end{equation} 
with $g_L^{(i)} = u_N^{(I)}$ for $i=1$ (to weakly enforce the periodic boundary conditions) and $g_L^{(i)} = u_N^{(i-1)}$ (to weakly couple neighboring blocks) otherwise. 

\begin{figure}[tb]
	\centering 
	\begin{subfigure}[b]{0.49\textwidth}
		\includegraphics[width=\textwidth]{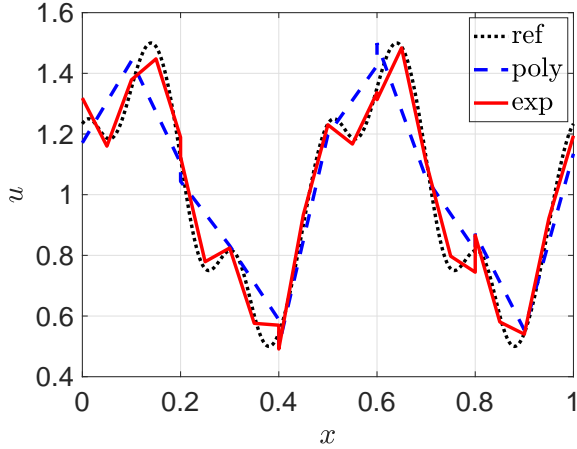} 
    		\caption{Solutions for $I=5$ blocks}
    		\label{fig:Burgers_sol_I5}
  	\end{subfigure}%
	~
	\begin{subfigure}[b]{0.49\textwidth}
		\includegraphics[width=\textwidth]{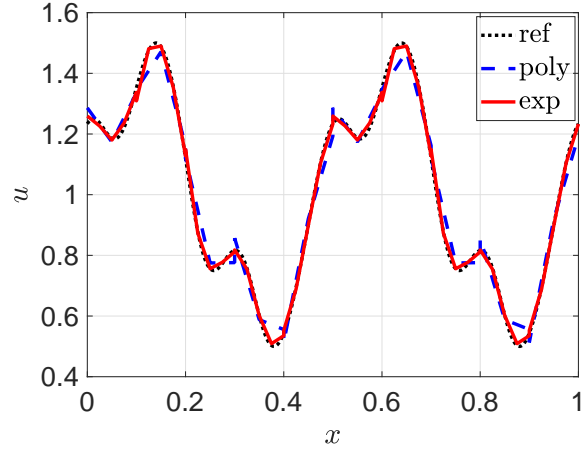} 
    		\caption{Solutions for $I=10$ blocks}
    		\label{fig:Burgers_sol_I10}
  	\end{subfigure}%
	\\ 
	\begin{subfigure}[b]{0.49\textwidth}
		\includegraphics[width=\textwidth]{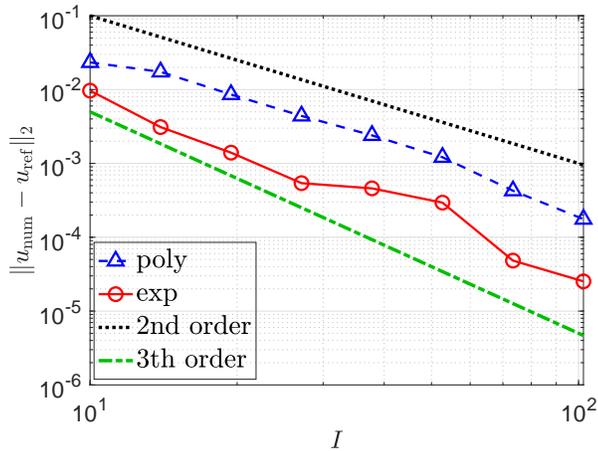} 
    		\caption{$\|\cdot\|_2$-errors}
    		\label{fig:Burgers_L2errors}
  	\end{subfigure}%
	~
  	\begin{subfigure}[b]{0.49\textwidth}
		\includegraphics[width=\textwidth]{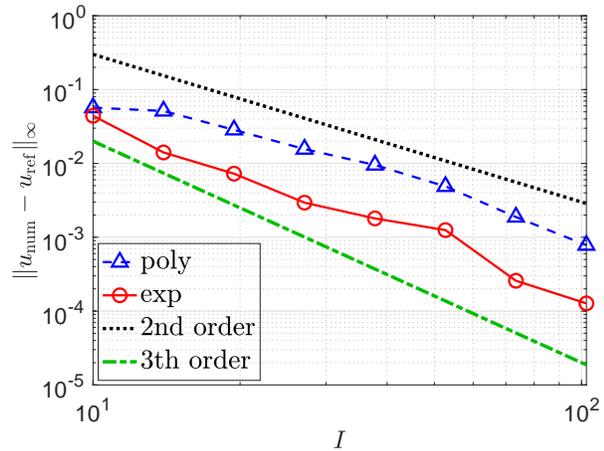} 
    		\caption{$\|\cdot\|_{\infty}$-errors}
    		\label{fig:Burgers_maxerrors}
  	\end{subfigure}%
  	\caption{ 
	(Numerical) solutions and the corresponding errors for Burgers' equation \cref{eq:Burgers} at $t=0.01$. 
	We used a multi-block FSBP-SAT semi-discretization \cref{eq:Burgers_discr} with a polynomial and exponential approximation space, $\mathcal{P}_2 = \Span\{1,x,x^2\}$ and $\mathcal{E}_2 = \Span\{1,x,e^x\}$, respectively.  
  	}
  	\label{fig:Burgers}
\end{figure}  

\cref{fig:Burgers} shows the (numerical) solutions (\cref{fig:Burgers_sol_I5,fig:Burgers_sol_I10}) and their errors (\cref{fig:Burgers_L2errors,fig:Burgers_maxerrors}) at time $t=0.01$.  
The reference solution (``ref") was computed using characteristic tracing. 
The numerical solutions were computed using a multi-block FSBP-SAT method with a three-dimensional polynomial and exponential approximation space $\mathcal{P}_2 = \Span\{1,x,x^2\}$ and $\mathcal{E}_2 = \Span\{1,x,e^x\}$, respectively. 
\cref{fig:Burgers_sol_I5,fig:Burgers_sol_I10} illustrate the results using $I=5$ and $I=10$ uniform blocks. 
We again find that the exponential approximation space $\mathcal{E}_2$ yields more accurate results than the usual polynomial approximation space $\mathcal{P}_2$. 
In accordance with this, \cref{fig:Burgers_L2errors,fig:Burgers_maxerrors} provide the $\|\cdot\|_2$- and $\|\cdot\|_{\infty}$-errors of both FSBP-SAT methods for an increasing number of uniform blocks, $I$. 
While the convergence rate of the multi-block FSBP-SAT methods seems to be roughly the same for both approximation spaces, the individual error levels are all found to be smaller for the $\mathcal{E}_2$-based SBP operator. 
In this case, the error difference is preserved on fine grids. 
We suspect that this is because the solution varies rapidly throughout the whole domain, and the $\mathcal{E}_2$-based SBP operators are therefore particularly advantageous.

\subsection{A discontinuous problem} 

Consider the inviscid Burgers' equation 
\begin{equation}\label{eq:Burgers_disc} 
\begin{aligned} 
	\partial_t u + \partial_x \left( \frac{u^2}{2} \right) & = 0, \quad && 0 < x < 1, \\ 
	u(x,0) & = 1 + \frac{3}{4} \sin(2 \pi x), \quad && 0 \leq x \leq 1, \\ 
	u(0,t) & = u(1,t), \quad && t \geq 0, 
\end{aligned}
\end{equation} 
with periodic boundary conditions and smooth initial data. 
The solution of \cref{eq:Burgers_disc} is smooth until the breaking time $t_b = 2/3\pi \approx 0.212$, after which the solution contains a shock discontinuity. 
Also note that the smooth solution features increasingly sharp gradients as $t \to t_b$. 

\begin{figure}[tb]
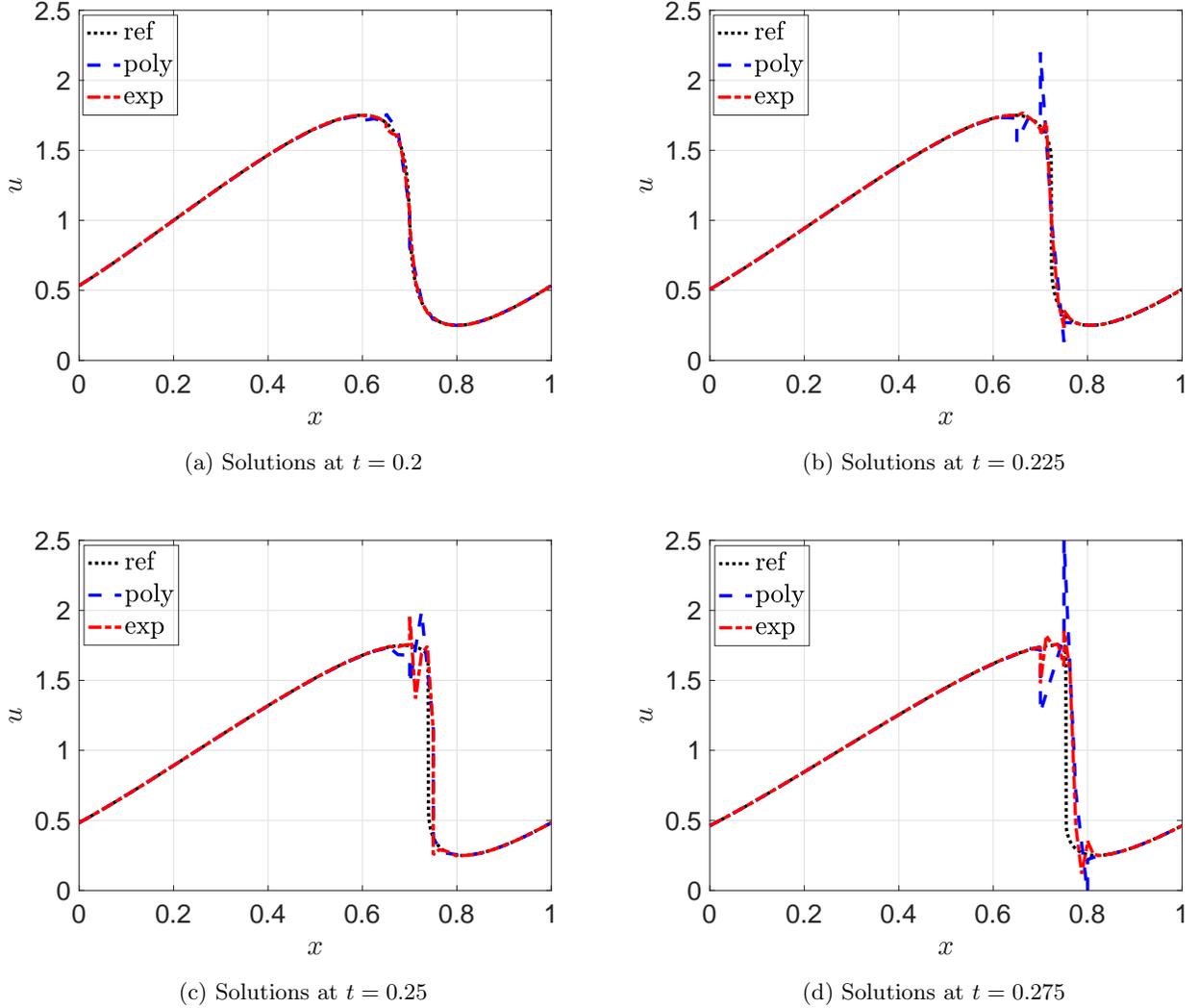

	\centering 
	\begin{subfigure}[b]{0.49\textwidth}
		\includegraphics[width=\textwidth]{%
      		figures/Burgers_disc_K3_I20_T200} 
    		\caption{Solutions at $t = 0.2$}
    		\label{fig:Burgers_disc_K3_I20_T200}
  	\end{subfigure}%
	~ 
	\begin{subfigure}[b]{0.49\textwidth}
		\includegraphics[width=\textwidth]{%
      		figures/Burgers_disc_K3_I20_T225} 
    		\caption{Solutions at $t = 0.225$}
    		\label{fig:Burgers_disc_K3_I20_T225}
  	\end{subfigure}%
	\\
	\begin{subfigure}[b]{0.49\textwidth}
		\includegraphics[width=\textwidth]{%
      		figures/Burgers_disc_K3_I20_T250} 
    		\caption{Solutions at $t = 0.25$}
    		\label{fig:Burgers_disc_K3_I20_T250}
  	\end{subfigure}%
	~ 
	\begin{subfigure}[b]{0.49\textwidth}
		\includegraphics[width=\textwidth]{%
      		figures/Burgers_disc_K3_I20_T275} 
    		\caption{Solutions at $t = 0.275$}
    		\label{fig:Burgers_disc_K3_I20_T275}
  	\end{subfigure}%
  	\caption{ 
	(Numerical) solutions for Burgers' equation \cref{eq:Burgers_disc}. 
	The exact solution features a shock discontunity starting at $t_b \approx 0.212$. 
	We used a multi-block FSBP-SAT semi-discretization \cref{eq:Burgers_discr} with $I=20$ blocks and a polynomial and exponential approximation space, $\mathcal{P}_2 = \Span\{1,x,x^2\}$ and $\mathcal{E}_2 = \Span\{1,x,e^x\}$, respectively.  
  	}
  	\label{fig:Burgers_disc}
\end{figure}

\cref{fig:Burgers_disc} shows the (numerical) solutions at different times close to the breaking time. 
We observe that using the exponential function space allows us to better resolve the steep gradients of the solution shortly before the breaking time. 
However, once the shock discontinuity has formed (see \cref{fig:Burgers_disc_K3_I20_T250,fig:Burgers_disc_K3_I20_T275}), also the numerical solution using the exponential function space shows spurious oscillations. 
Future work will include limiting \cite{cockburn1989tvb,zhang2011maximum}, artificial dissipation \cite{mattsson2004stable,ranocha2018stability}, and other shock capturing procedures for non-polynomial approximation spaces and FSBP operators.  
\section{Concluding thoughts} 
\label{sec:summary} 

In this work, we introduced the concept of FSBP operators, which is a generalization of the existing polynomial-based SBP operators to general approximation spaces. 
We demonstrated that using, for instance, trigonometric or exponential approximation spaces can be of advantage when we have some prior knowledge about the behavior (periodic, oscillatory, varies rapidly) of the otherwise unknown solution.  
Furthermore, we showed that most of the mimetic properties for polynomial-based SBP operators carry over to the more general class of FSBP operators. 
The same holds true for the construction of FSBP operators, which can be done using the same strategies as for polynomial-based SBP operators. 
At the same time, we stress that for FSBP operators to mimic conservation, the associated approximation space needs to include constants, which is naturally satisfied for polynomial-based SBP operators. 
Our findings imply that the concept of SBP operators can be applied to a larger class of methods than currently known. 
Such methods might include exponentially fitted schemes for singular perturbation problems, discontinuous Galerkin methods based on non-polynomial approximation spaces, and certain ENO and WENO reconstructions. 
Since the first draft of the present manuscript, we have demonstrated in \cite{glaubitz2022energy} how FSBP operators can be used to construct provable energy-stable global radial basis function methods. 
In future work, the extension of the proposed FSBP operators to multiple dimensions as well as an investigation of their CFL limits and dispersion properties similar to \cite{gassner2011comparison} will be considered. 
Also limiting \cite{cockburn1989tvb,zhang2011maximum}, artificial dissipation \cite{mattsson2004stable,ranocha2018stability}, and other shock capturing procedures for FSBP operators in combination with their application to discontinuous problems will be included in future efforts. 

\appendix 
\section{Error minimization for $\sigma=1$}  
\label{app:error}

Let $u$ and $v$ denote the solution of the linear advection equation \cref{eq:linear} with the same initial condition but different boundary data $u(t,x_L) = g$ and $v(t,x_L) = \tilde{g}(t)$, respectively. 
Furthermore, we denote their difference by $d = u - v$ and the difference in the boundary data by $\delta = g - \tilde{g}$. 
Assume that the semi-discretizations $\mathbf{u}, \mathbf{v}$ of $u,v$ are obtained by an FSBP-SAT method. 
That is, they satisfy \cref{eq:lin_discr}. 
Consequently, the error between the two numerical solutions, $\mathbf{e} = \mathbf{u} - \mathbf{v}$, satisfies 
\begin{equation} 
	\mathbf{e}_t  +a D \mathbf{e} = P^{-1} \left( \mathbb{S} - \tilde{\mathbb{S}} \right),
\end{equation}
where 
\begin{equation} 
\begin{aligned} 
	\mathbb{S} 
		& = [\mathbb{S}_1,0,\dots,0]^T, \quad 
		&& \mathbb{S}_1 = - \sigma a (u_1-g), \\ 
	\tilde{\mathbb{S}} 
		& = [\tilde{\mathbb{S}}_1,0,\dots,0]^T, \quad 
		&& \tilde{\mathbb{S}}_1 = - \sigma a (v_1-\tilde{g}).
\end{aligned}
\end{equation} 
We can follow the same arguments as in \cref{sub:linear_energy} (also see \cite[page 11]{aalund2016provably}) and arrive at  
\begin{equation} 
	\frac{\d}{\d t} \| \mathbf{e} \|_P^2
		\leq a \delta^2 \left( \frac{\sigma}{2\sigma-1} \right)
\end{equation} 
for $\sigma > 1/2$. 
That is, the rate of change of the difference between $\mathbf{u}$ and $\mathbf{v}$ can be bounded in terms of the boundary data error $\delta = g - \tilde{g}$ and the term $\sigma/(2\sigma-1)$. 
Finally, this term is minimized for $\sigma = 1$. 
 
\section{Outlook on nonlinear problems and entropy stability} 
\label{app:outlook} 

In the classical SBP theory, it is well known that provable energy-stability for nonlinear equations (or variable coefficients) relies on splitting formulations of the equation \cite{fernandez2014review,nordstrom2006conservative,offner2019error}. 
The same holds true for FSBP operators and the corresponding FSBP-SAT method.
Indeed, by now, we have observed several times that most of the well-known results for polynomial-based SBP operators also hold for FSBP operators. 
A natural extension might be now to combine FSBP operators with the popular flux differencing approach \cite{carpenter2014entropy,offner2019error,abgrall2019reinterpretation,chen2020review}. 
The main idea behind this technique is to apply an entropy-conservative flux in the sense of Tadmor \cite{tadmor1987numerical} for the internal point discretization. 
When starting with classical SBP operators, one can manipulate the internal discretization by constructing telescoping operators \cite{fisher2013high_2, fisher2013discretely}. 
Following this procedure, one would analogously obtain entropy-stable schemes using FSBP-operators. 
The results transform naturally to the more general setting with only minor modifications.
In our numerical Section \ref{sec:num_tests}, we have demonstrated this.
A skew-symmetric formulation has been used for the Burgers' equation resulting in an entropy dissipative scheme.
A more detailed demonstration will be provided in future work.   

\section*{Acknowledgements}
This research was supported by AFOSR \#F9550-18-1-0316, ONR MURI \#N00014-20-1-2595, Vetenskapsr\r{a}det Sweden grant 2018-05084 VR and 2021-05484, the Swedish e-Science Research Center (SeRC), and the Gutenberg Research College, JGU Mainz.

\small
\bibliographystyle{siamplain}
\bibliography{references}

\begin{thebibliography}{10}

\bibitem{abgrall2020analysis}
{\sc R.~Abgrall, J.~Nordstr{\"o}m, P.~{\"O}ffner, and S.~Tokareva}, {\em
  Analysis of the {SBP-SAT} stabilization for finite element methods part i:
  Linear problems}, Journal of Scientific Computing, 85 (2020), pp.~1--29.

\bibitem{abgrall2021analysis}
{\sc R.~Abgrall, J.~Nordstr{\"o}m, P.~{\"O}ffner, and S.~Tokareva}, {\em
  Analysis of the {SBP-SAT} stabilization for finite element methods part ii:
  Entropy stability}, Communications on Applied Mathematics and Computation,
  (2021), pp.~1--23.

\bibitem{abgrall2019reinterpretation}
{\sc R.~Abgrall, P.~{\"O}ffner, and H.~Ranocha}, {\em Reinterpretation and
  extension of entropy correction terms for residual distribution and
  discontinuous {G}alerkin schemes: Application to structure preserving
  discretization}, Journal of Computational Physics,  (2022), p.~110955.

\bibitem{abramowitz1972handbook}
{\sc M.~Abramowitz and I.~A. Stegun}, {\em Handbook of Mathematical Functions
  With Formulas, Graphs, and Mathematical Tables}, vol.~55, US Government
  printing office, 1972.

\bibitem{aalund2016provably}
{\sc O.~{\AA}lund and J.~Nordstr{\"o}m}, {\em A provably stable, non-iterative
  domain decomposition technique for the advection-diffusion equation}, 2016.
\newblock Technical report.

\bibitem{aalund2019encapsulated}
{\sc O.~{\AA}lund and J.~Nordstr{\"o}m}, {\em Encapsulated high order
  difference operators on curvilinear non-conforming grids}, Journal of
  Computational Physics, 385 (2019), pp.~209--224.

\bibitem{brass2011quadrature}
{\sc H.~Brass and K.~Petras}, {\em Quadrature Theory: The Theory of Numerical
  Integration on a Compact Interval}, no.~178 in Mathematical Surveys and
  Monographs, AMS, 2011.

\bibitem{carpenter2016entropy}
{\sc M.~Carpenter, T.~Fisher, E.~Nielsen, M.~Parsani, M.~Sv{\"a}rd, and
  N.~Yamaleev}, {\em Entropy stable summation-by-parts formulations for
  compressible computational fluid dynamics}, in Handbook of Numerical
  Analysis, vol.~17, Elsevier, 2016, pp.~495--524.

\bibitem{carpenter2014entropy}
{\sc M.~H. Carpenter, T.~C. Fisher, E.~J. Nielsen, and S.~H. Frankel}, {\em
  Entropy stable spectral collocation schemes for the {N}avier--{S}tokes
  equations: {D}iscontinuous interfaces}, SIAM Journal on Scientific Computing,
  36 (2014), pp.~B835--B867.

\bibitem{chen2017entropy}
{\sc T.~Chen and C.-W. Shu}, {\em Entropy stable high order discontinuous
  {G}alerkin methods with suitable quadrature rules for hyperbolic conservation
  laws}, Journal of Computational Physics, 345 (2017), pp.~427--461.

\bibitem{chen2020review}
{\sc T.~Chen and C.-W. Shu}, {\em Review of entropy stable discontinuous
  {G}alerkin methods for systems of conservation laws on unstructured simplex
  meshes}, CSIAM Transactions on Applied Mathematics, 1 (2020), pp.~1--52.

\bibitem{christofi1996study}
{\sc S.~N. Christofi}, {\em The study of building blocks for essentially
  non-oscillatory {(ENO)} schemes}, Brown University, 1996.

\bibitem{cockburn1989tvb}
{\sc B.~Cockburn and C.-W. Shu}, {\em Tvb {R}unge--{K}utta local projection
  discontinuous {G}alerkin finite element method for conservation laws. ii.
  general framework}, Mathematics of Computation, 52 (1989), pp.~411--435.

\bibitem{cordero1996gaussian}
{\sc M.~Cordero, C.~Martin, and J.~Miller}, {\em Gaussian quadrature for
  products of exponential functions}, Applied Mathematics and Computation, 79
  (1996), pp.~189--202.

\bibitem{davis2007methods}
{\sc P.~J. Davis and P.~Rabinowitz}, {\em Methods of Numerical Integration},
  Courier Corporation, 2007.

\bibitem{engels1980numerical}
{\sc H.~Engels}, {\em Numerical Quadrature and Cubature}, Academic Press, 1980.

\bibitem{fasshauer1996solving}
{\sc G.~E. Fasshauer}, {\em Solving partial differential equations by
  collocation with radial basis functions}, in Proceedings of Chamonix,
  vol.~1997, Vanderbilt University Press Nashville, TN, 1996, pp.~1--8.

\bibitem{fernandez2014generalized}
{\sc D.~C. D.~R. Fern{\'a}ndez, P.~D. Boom, and D.~W. Zingg}, {\em A
  generalized framework for nodal first derivative summation-by-parts
  operators}, Journal of Computational Physics, 266 (2014), pp.~214--239.

\bibitem{fernandez2019staggered}
{\sc D.~C. D.~R. Fern{\'a}ndez, J.~Crean, M.~H. Carpenter, and J.~E. Hicken},
  {\em Staggered-grid entropy-stable multidimensional summation-by-parts
  discretizations on curvilinear coordinates}, Journal of Computational
  Physics, 392 (2019), pp.~161--186.

\bibitem{fernandez2014review}
{\sc D.~C. D.~R. Fern{\'a}ndez, J.~E. Hicken, and D.~W. Zingg}, {\em Review of
  summation-by-parts operators with simultaneous approximation terms for the
  numerical solution of partial differential equations}, Computers \& Fluids,
  95 (2014), pp.~171--196.

\bibitem{fisher2013high_2}
{\sc T.~C. Fisher and M.~H. Carpenter}, {\em High-order entropy stable finite
  difference schemes for nonlinear conservation laws: Finite domains}, Journal
  of Computational Physics, 252 (2013), pp.~518--557.

\bibitem{fisher2013discretely}
{\sc T.~C. Fisher, M.~H. Carpenter, J.~Nordstr{\"o}m, N.~K. Yamaleev, and
  C.~Swanson}, {\em Discretely conservative finite-difference formulations for
  nonlinear conservation laws in split form: {T}heory and boundary conditions},
  Journal of Computational Physics, 234 (2013), pp.~353--375.

\bibitem{fisher2011boundary}
{\sc T.~C. Fisher, M.~H. Carpenter, N.~K. Yamaleev, and S.~H. Frankel}, {\em
  Boundary closures for fourth-order energy stable weighted essentially
  non-oscillatory finite-difference schemes}, Journal of Computational Physics,
  230 (2011), pp.~3727--3752.

\bibitem{fornberg2015primer}
{\sc B.~Fornberg and N.~Flyer}, {\em A Primer on Radial Basis Functions With
  Applications to the Geosciences}, SIAM, 2015.

\bibitem{fornberg2015solving}
{\sc B.~Fornberg and N.~Flyer}, {\em Solving {PDE}s with radial basis
  functions}, Acta Numerica, 24 (2015), pp.~215--258.

\bibitem{gassner2011comparison}
{\sc G.~Gassner and D.~A. Kopriva}, {\em A comparison of the dispersion and
  dissipation errors of {G}auss and {G}auss--{L}obatto discontinuous {G}alerkin
  spectral element methods}, SIAM Journal on Scientific Computing, 33 (2011),
  pp.~2560--2579.

\bibitem{gassner2013skew}
{\sc G.~J. Gassner}, {\em A skew-symmetric discontinuous {G}alerkin spectral
  element discretization and its relation to {SBP-SAT} finite difference
  methods}, SIAM Journal on Scientific Computing, 35 (2013), pp.~A1233--A1253.

\bibitem{gassner2016split}
{\sc G.~J. Gassner, A.~R. Winters, and D.~A. Kopriva}, {\em Split form nodal
  discontinuous {G}alerkin schemes with summation-by-parts property for the
  compressible {E}uler equations}, Journal of Computational Physics, 327
  (2016), pp.~39--66.

\bibitem{gautschi1997numerical}
{\sc W.~Gautschi}, {\em Numerical Analysis}, Springer Science \& Business
  Media, 1997.

\bibitem{glaubitz2020shock}
{\sc J.~Glaubitz}, {\em Shock Capturing and High-Order Methods for Hyperbolic
  Conservation Laws}, Logos Verlag Berlin GmbH, 2020.

\bibitem{glaubitz2020stableQF}
{\sc J.~Glaubitz}, {\em Stable high order quadrature rules for scattered data
  and general weight functions}, SIAM Journal on Numerical Analysis, 58 (2020),
  pp.~2144--2164.

\bibitem{glaubitz2020stableCFs}
{\sc J.~Glaubitz}, {\em Stable high-order cubature formulas for experimental
  data}, Journal of Computational Physics,  (2021), p.~110693.

\bibitem{glaubitz2021constructing}
{\sc J.~Glaubitz}, {\em Construction and application of provable positive and
  exact cubature formulas}, IMA Journal of Numerical Analysis, drac017 (2022),
  \url{https://doi.org/10.1093/imanum/drac017}.

\bibitem{glaubitz2021stabilizing}
{\sc J.~Glaubitz and A.~Gelb}, {\em Stabilizing radial basis function methods
  for conservation laws using weakly enforced boundary conditions}, Journal of
  Scientific Computing, 87 (2021), pp.~1--29.

\bibitem{glaubitz2021towards}
{\sc J.~Glaubitz, E.~Le~Meledo, and P.~{\"O}ffner}, {\em Towards stable radial
  basis function methods for linear advection problems}, Computers \&
  Mathematics with Applications, 85 (2021), pp.~84--97.

\bibitem{glaubitz2022energy}
{\sc J.~Glaubitz, J.~Nordstr{\"o}m, and P.~{\"O}ffner}, {\em Energy-stable
  global radial basis function methods on summation-by-parts form}, arXiv
  preprint arXiv:2204.03291,  (2022).

\bibitem{glaubitz2020stableDG}
{\sc J.~Glaubitz and P.~{\"O}ffner}, {\em Stable discretisations of high-order
  discontinuous {G}alerkin methods on equidistant and scattered points},
  Applied Numerical Mathematics, 151 (2020), pp.~98--118.

\bibitem{glaubitz2021towardsRBF}
{\sc J.~Glaubitz and J.~Reeger}, {\em Towards stability results for global
  radial basis function based quadrature formulas}, BIT Numerical Mathematics,
  63 (2023), p.~6, \url{https://doi.org/10.1007/s10543-023-00956-0}.

\bibitem{golub2012matrix}
{\sc G.~H. Golub and C.~F. Van~Loan}, {\em Matrix Computations}, vol.~3, JHU
  Press, 2012.

\bibitem{gopal2019solving}
{\sc A.~Gopal and L.~N. Trefethen}, {\em Solving {L}aplace problems with corner
  singularities via rational functions}, SIAM Journal on Numerical Analysis, 57
  (2019), pp.~2074--2094.

\bibitem{hesthaven2019entropy}
{\sc J.~S. Hesthaven and F.~M{\"o}nkeberg}, {\em Entropy stable essentially
  nonoscillatory methods based on {RBF} reconstruction}, ESAIM: Mathematical
  Modelling and Numerical Analysis, 53 (2019), pp.~925--958.

\bibitem{hicken2016multidimensional}
{\sc J.~E. Hicken, D.~C. Del Rey~Fern{\'a}ndez, and D.~W. Zingg}, {\em
  Multidimensional summation-by-parts operators: general theory and application
  to simplex elements}, SIAM Journal on Scientific Computing, 38 (2016),
  pp.~A1935--A1958.

\bibitem{hicken2013summation}
{\sc J.~E. Hicken and D.~W. Zingg}, {\em Summation-by-parts operators and
  high-order quadrature}, Journal of Computational and Applied Mathematics, 237
  (2013), pp.~111--125.

\bibitem{huybrechs2009stable}
{\sc D.~Huybrechs}, {\em Stable high-order quadrature rules with equidistant
  points}, Journal of Computational and Applied Mathematics, 231 (2009),
  pp.~933--947.

\bibitem{huynh2007flux}
{\sc H.~T. Huynh}, {\em A flux reconstruction approach to high-order schemes
  including discontinuous {G}alerkin methods}, in 18th AIAA Computational Fluid
  Dynamics Conference, 2007, p.~4079.

\bibitem{iske1996structure}
{\sc A.~Iske and T.~Sonar}, {\em On the structure of function spaces in optimal
  recovery of point functionals for {ENO}-schemes by radial basis functions},
  Numerische Mathematik, 74 (1996), pp.~177--201.

\bibitem{kadalbajoo2003exponentially}
{\sc M.~Kadalbajoo and K.~Patidar}, {\em Exponentially fitted spline in
  compression for the numerical solution of singular perturbation problems},
  Computers \& Mathematics with Applications, 46 (2003), pp.~751--767.

\bibitem{kalashnikova2009discontinuous}
{\sc I.~Kalashnikova, C.~Farhat, and R.~Tezaur}, {\em A discontinuous
  enrichment method for the finite element solution of high {P\'e}clet
  advection--diffusion problems}, Finite Elements in Analysis and Design, 45
  (2009), pp.~238--250.

\bibitem{kreiss1974finite}
{\sc H.-O. Kreiss and G.~Scherer}, {\em Finite element and finite difference
  methods for hyperbolic partial differential equations}, in Mathematical
  Aspects of Finite Elements in Partial Differential Equations, Elsevier, 1974,
  pp.~195--212.

\bibitem{kreiss1977existence}
{\sc H.-O. Kreiss and G.~Scherer}, {\em On the existence of energy estimates
  for difference approximations for hyperbolic systems}, 1977.
\newblock Technical report.

\bibitem{linders2018order}
{\sc V.~Linders, T.~Lundquist, and J.~Nordstr\"om}, {\em On the order of
  accuracy of finite difference operators on diagonal norm based
  summation-by-parts form}, SIAM Journal on Numerical Analysis, 56 (2018),
  pp.~1048--1063.

\bibitem{linders2020properties}
{\sc V.~Linders, J.~Nordstr{\"o}m, and S.~H. Frankel}, {\em Properties of
  {R}unge--{K}utta-summation-by-parts methods}, Journal of Computational
  Physics, 419 (2020), p.~109684.

\bibitem{mattsson2014diagonal}
{\sc K.~Mattsson}, {\em Diagonal-norm summation by parts operators for finite
  difference approximations of third and fourth derivatives}, Journal of
  Computational Physics, 274 (2014), pp.~432--454.

\bibitem{mattsson2017diagonal}
{\sc K.~Mattsson}, {\em Diagonal-norm upwind {SBP} operators}, Journal of
  Computational Physics, 335 (2017), pp.~283--310.

\bibitem{mattsson2004summation}
{\sc K.~Mattsson and J.~Nordstr{\"o}m}, {\em Summation by parts operators for
  finite difference approximations of second derivatives}, Journal of
  Computational Physics, 199 (2004), pp.~503--540.

\bibitem{mattsson2004stable}
{\sc K.~Mattsson, M.~Sv{\"a}rd, and J.~Nordstr{\"o}m}, {\em Stable and accurate
  artificial dissipation}, Journal of Scientific Computing, 21 (2004),
  pp.~57--79.

\bibitem{migliorati2022stable}
{\sc G.~Migliorati and F.~Nobile}, {\em Stable high-order randomized cubature
  formulae in arbitrary dimension}, Journal of Approximation Theory,  (2022),
  p.~105706.

\bibitem{mysovskikh2001cubature}
{\sc I.~P. Mysovskikh}, {\em Cubature formulae that are exact for trigonometric
  polynomials}, TW Reports,  (2001).

\bibitem{nakatsukasa2018aaa}
{\sc Y.~Nakatsukasa, O.~S{\`e}te, and L.~N. Trefethen}, {\em The {AAA}
  algorithm for rational approximation}, SIAM Journal on Scientific Computing,
  40 (2018), pp.~A1494--A1522.

\bibitem{nordstrom2006conservative}
{\sc J.~Nordstr{\"o}m}, {\em Conservative finite difference formulations,
  variable coefficients, energy estimates and artificial dissipation}, Journal
  of Scientific Computing, 29 (2006), pp.~375--404.

\bibitem{nordstrom2001finite}
{\sc J.~Nordstr{\"o}m and M.~Bj{\"o}rck}, {\em Finite volume approximations and
  strict stability for hyperbolic problems}, Applied Numerical Mathematics, 38
  (2001), pp.~237--255.

\bibitem{nordstrom2003finite}
{\sc J.~Nordstr{\"o}m, K.~Forsberg, C.~Adamsson, and P.~Eliasson}, {\em Finite
  volume methods, unstructured meshes and strict stability for hyperbolic
  problems}, Applied Numerical Mathematics, 45 (2003), pp.~453--473.

\bibitem{nordstrom2019energy}
{\sc J.~Nordstr{\"o}m and C.~La~Cognata}, {\em Energy stable boundary
  conditions for the nonlinear incompressible {N}avier--{S}tokes equations},
  Mathematics of Computation, 88 (2019), pp.~665--690.

\bibitem{nordstrom2013summation}
{\sc J.~Nordstr{\"o}m and T.~Lundquist}, {\em Summation-by-parts in time},
  Journal of Computational Physics, 251 (2013), pp.~487--499.

\bibitem{nordstrom2017conservation}
{\sc J.~Nordstr{\"o}m and A.~A. Ruggiu}, {\em On conservation and stability
  properties for summation-by-parts schemes}, Journal of Computational Physics,
  344 (2017), pp.~451--464.

\bibitem{offner2018stability}
{\sc P.~{\"O}ffner, J.~Glaubitz, and H.~Ranocha}, {\em Stability of correction
  procedure via reconstruction with summation-by-parts operators for {B}urgers'
  equation using a polynomial chaos approach}, ESAIM: Mathematical Modelling
  and Numerical Analysis, 52 (2018), pp.~2215--2245.

\bibitem{offner2019error}
{\sc P.~{\"O}ffner and H.~Ranocha}, {\em Error boundedness of discontinuous
  {G}alerkin methods with variable coefficients}, Journal of Scientific
  Computing, 79 (2019), pp.~1572--1607.

\bibitem{o2017energy}
{\sc O.~O'Reilly, T.~Lundquist, E.~M. Dunham, and J.~Nordstr{\"o}m}, {\em
  Energy stable and high-order-accurate finite difference methods on staggered
  grids}, Journal of Computational Physics, 346 (2017), pp.~572--589.

\bibitem{ranocha2018stability}
{\sc H.~Ranocha, J.~Glaubitz, P.~{\"O}ffner, and T.~Sonar}, {\em Stability of
  artificial dissipation and modal filtering for flux reconstruction schemes
  using summation-by-parts operators}, Applied Numerical Mathematics, 128
  (2018), pp.~1--23.

\bibitem{ranocha2021broad}
{\sc H.~Ranocha, D.~Mitsotakis, and D.~I. Ketcheson}, {\em A broad class of
  conservative numerical methods for dispersive wave equations}, Communications
  in Computational Physics, 29 (2021), pp.~979--1029.

\bibitem{ranocha2021new}
{\sc H.~Ranocha and J.~Nordstr{\"o}m}, {\em A new class of a stable summation
  by parts time integration schemes with strong initial conditions}, Journal of
  Scientific Computing, 87 (2021), pp.~1--25.

\bibitem{ranocha2016summation}
{\sc H.~Ranocha, P.~{\"O}ffner, and T.~Sonar}, {\em Summation-by-parts
  operators for correction procedure via reconstruction}, Journal of
  Computational Physics, 311 (2016), pp.~299--328.

\bibitem{scherer1977energy}
{\sc G.~Scherer}, {\em On energy estimates for difference approximations to
  hyperbolic partial differential equations}, PhD thesis, Uppsala University,
  1977.

\bibitem{shu1988total}
{\sc C.-W. Shu}, {\em Total-variation-diminishing time discretizations}, SIAM
  Journal on Scientific and Statistical Computing, 9 (1988), pp.~1073--1084.

\bibitem{sommariva2021rbf}
{\sc A.~Sommariva and M.~Vianello}, {\em {RBF} moment computation and meshless
  cubature on general polygonal regions}, Applied Mathematics and Computation,
  409 (2021), p.~126375.

\bibitem{stone1948generalized}
{\sc M.~H. Stone}, {\em The generalized {W}eierstrass approximation theorem},
  Mathematics Magazine, 21 (1948), pp.~237--254.

\bibitem{strand1994summation}
{\sc B.~Strand}, {\em Summation by parts for finite difference approximations
  for d/dx}, Journal of Computational Physics, 110 (1994), pp.~47--67.

\bibitem{svard2014review}
{\sc M.~Sv{\"a}rd and J.~Nordstr{\"o}m}, {\em Review of summation-by-parts
  schemes for initial--boundary-value problems}, Journal of Computational
  Physics, 268 (2014), pp.~17--38.

\bibitem{tadmor1987numerical}
{\sc E.~Tadmor}, {\em The numerical viscosity of entropy stable schemes for
  systems of conservation laws. {I}}, Mathematics of Computation, 49 (1987),
  pp.~91--103.

\bibitem{watson2022RBF}
{\sc R.~Watson and W.~Trojak}, {\em On the use of {RBF} interpolation for flux
  reconstruction}, arXiv preprint arXiv:2201.01548,  (2022).

\bibitem{wendland2004scattered}
{\sc H.~Wendland}, {\em Scattered Data Approximation}, vol.~17, Cambridge
  University Press, 2004.

\bibitem{yamaleev2009systematic}
{\sc N.~K. Yamaleev and M.~H. Carpenter}, {\em A systematic methodology for
  constructing high-order energy stable {WENO} schemes}, Journal of
  Computational Physics, 228 (2009), pp.~4248--4272.

\bibitem{yang2016short}
{\sc H.~Yang and J.~Yoon}, {\em A short note on the error estimates of
  {Y}uan--{S}hu discontinuous {G}alerkin method based on non-polynomial
  approximation spaces}, Journal of Computational Physics, 320 (2016),
  pp.~33--39.

\bibitem{yuan2006discontinuous}
{\sc L.~Yuan and C.-W. Shu}, {\em Discontinuous {G}alerkin method based on
  non-polynomial approximation spaces}, Journal of Computational Physics, 218
  (2006), pp.~295--323.

\bibitem{zhang2011maximum}
{\sc X.~Zhang and C.-W. Shu}, {\em Maximum-principle-satisfying and
  positivity-preserving high-order schemes for conservation laws: survey and
  new developments}, Proceedings of the Royal Society A: Mathematical, Physical
  and Engineering Sciences, 467 (2011), pp.~2752--2776.

\end{thebibliography}

\end{document}